\newcommand{\fixed@sra}{$\vrule height 2\fontdimen22\textfont2 width 0pt\shortrightarrow$}
\newcommand{\shortarrow}[1]{%
  \mathrel{\text{\rotatebox[origin=c]{\numexpr#1*45}{\fixed@sra}}}
}
\DeclareMathOperator*{\argmin}{arg\,min}
\newtheorem{theorem}{Theorem}
\newtheorem{remark}{Remark}%
\newcommand{\juannote}[1]{\textit{\textcolor{green}{[JUAN]: #1}}}
\begin{document}

\title[Article Title]{Sensitivity analysis of fractional linear systems based on random walks with negligible memory usage}

\author*[1]{\fnm{Andr\'es} \sur{Centeno}}\email{andres.sanchez@tecnico.ulisboa.pt} 
\author[1,2]{\fnm{Juan} \sur{A. Acebr\'on}}\email{juan.acebron@ist.utl.pt}

\author[1]{\fnm{Jos\'e} \sur{Monteiro}}\email{jcm@inesc-id.pt}

\affil*[1]{\orgdiv{INESC-ID}, \orgname{Instituto Superior Técnico}, \orgaddress{\city{Universidade de Lisboa}, \country{Portugal}}}
\affil[2]{\orgdiv{Department of Mathematics}, \orgname{Carlos III University of Madrid}, \orgaddress{\country{Spain}}}

\abstract{A random walk-based  method is proposed to efficiently compute the solution of a large class of fractional in time linear systems of differential equations (linear F-ODE systems) with an inhomogeneous fractional exponent, along with the derivatives with respect to the system parameters. Such a method is unbiased and unconditionally stable, and can therefore be used to provide  an unbiased estimation of individual entries of the solution, or the full solution. By using stochastic differentiation techniques, it can be used as well to provide unbiased estimators of the sensitivities of the solution with respect to the problem parameters without any additional computational cost. The time complexity of the algorithm is discussed here, along with suitable variance bounds, which prove in practice the convergence of the algorithm. Finally, several test cases with random matrices were run to assess the validity of the algorithm, together with a more realistic example.}

\keywords{linear F-ODE systems, Malliavin weights, conditional Monte-Carlo, fractional sensitivities, stochastic methods}


\pacs[MSC Classification]{65C05, 65C30, 34A08, 60K50}
\pacs[Acknowledgements]{
This work was supported by national funds through Fun\-da\-ção para a Ciência e a Tecnologia under projects URA-HPC PTDC/08838/2022 and   UIDB/50021/2020 (DOI:10.54499/UIDB/50021/2020).
JA was funded by  Ministerio de Universidades and specifically the requalification program of the Spanish University System 2021-2023 at the Carlos III University.}
\maketitle

\section{Introduction}\label{sec1}

In today's scientific landscape, the fast computation of sensitivities of a numerical method has become as important as finding a fast numerical method itself \cite{razaviFutureSensitivityAnalysis2021}. Sensitivity information is used typically when modeling natural phenomena and enginereering problems for determining the most relevant parameters influencing the behavior of the simulations, and becomes crucial many times in the design of the experiments, data assimilation, and the reduction of complex models. Moreover, it is almost never the case that all parameters of a model are known, so users need to either complete the parameter vector by providing rough estimations or by learning the parameters from a few observations of the system \cite{hasanovhasanogluIntroductionInverseProblems2017}. The latter, when using gradient-based optimization, requires the scalable computation of sensitivities of the solution in order to find a local minima for the parameters in a reasonable time, as seen in the end loop of Fig. \ref{fig:flowchart}.

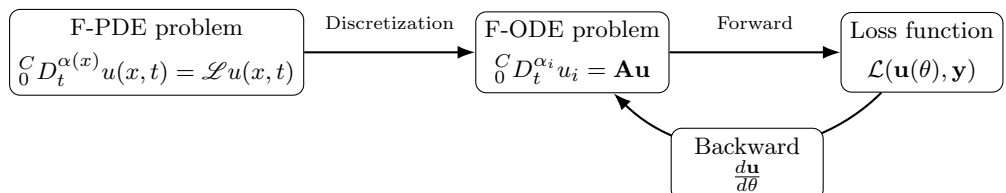
\begin{figure}[b]
    \centering
\begin{tikzpicture}[
    node distance=2.25cm,
    every node/.style={draw, align=center, rounded corners, minimum height=0.75cm, minimum width=2cm, font=\small},
    arrow/.style={-Latex, thick},
    ]

    \node (fpde) {F-PDE problem\\[1ex]$^C_0 D^{\alpha(x)}_t u(x,t) = \mathscr{L} u(x,t)$};
    \node (fode) [right=of fpde] {F-ODE problem\\[1ex]$^C_0 D^{\alpha_i}_t u_i = \mathbf{A}\mathbf{u}$};
    \node (loss) [right=of fode] {Loss function\\[1ex]$\mathcal{L}(\mathbf{u}(\theta),\mathbf{y})$};

    \draw[arrow] (fpde) -- (fode) node[midway, above, sloped, draw=none, font=\footnotesize] {Discretization};
    \draw[arrow] (fode) -- (loss) node[midway, above, sloped, draw=none, font=\footnotesize] {Forward};
    \draw[arrow, bend left=45] (loss) to (fode);

    \node at ($(loss)!0.5!(fode)$) [below=1cm, fill=white] {Backward\\$\frac{d\mathbf{u}}{d\theta}$};
\end{tikzpicture}\textbf{}    
\caption{Flowchart for fractional inverse problems. The F-PDE is discretized onto an F-ODE, then a loss is computed and parameters are updated to reduce that loss until it falls bellow a threshold or a maximum number of iterations is reached. Looping through the last step fast enough is critical for the algorithm to converge in a reasonable time.}
    \label{fig:flowchart}
\end{figure}
There has been an increasing push towards fractional order modeling in the last century \cite{west2014colloquium}. The main motivation was to explain experimental data characterized by a nonlinear mean squared displacement with respect to time, that traditional diffusion models cannot explain. Fractional derivatives account for that anomalous behaviour~\cite{zaburdaev2015levy}, and contain classical derivatives as a limiting case when the fractional exponent approaches an integer, which makes fractional modelling somehow a superset of classical modelling. Even to this date, fractional modelling is an active area of research finding experimental sub/superdiffusive behavior in phenomena long thought to be purely classical \cite{liu2023experimental}.

The non-locality of the fractional operators leads to more unstable and expensive numerical methods, both in terms of execution time and memory. As an example, the fractional Laplacian operator when discretized results in a dense matrix \cite{ilic2006numerical}. This leads not only to a heavy memory overhead but also to a lack of scalability for the corresponding algorithms. Moreover, the process of raising a matrix to a fractional power becomes highly non-trivial as well \cite{cusimano2018discretizations}.
In fractional in time order systems of differential equations (F-ODE systems), due to the loss of locality \cite{yan2018analysis} it is typically required for the numerical schemes to store the solution at all previous timesteps to advance forward in time. Many modern numerical methods explore alternatives to alleviate this issue \cite{diethelm2019fundamental}, but cannot ultimately avoid the complexity of the problem itself, which greatly magnifies when trying to repeatedly compute the sensitivities of the method for instance when solving an inverse problem.

In the inverse problem setting there is a clear distinction between linear and non-linear problems. Nowadays, the most current promising techniques for the non-linear case come from leveraging fractional models with deep learning techniques. The two most notable techniques, which adopt quite different approaches, are neural fractional ordinary differential equations \cite{jafarian2017artificial,antil2020fractional} and fractional Physics-Informed Neural Networks (fPINNs) \cite{pang2019fpinns,guo2022monte}. The latter is state-of-the-art for inverse problems for non-linear fractional partial differential equations (F-PDEs). In our work we are concerned only with the fractional-in-time linear case, for which there is long-established research on fast and stable numerical algorithms \cite{diethelm2004numerical,magin2011fractional}, and methods for solving the corresponding inverse problems \cite{bondarenko2009numerical,miller2013coefficient,jiang2018bayesian}. 

However, scalable methods to efficiently compute the sensitivities of a F-ODE system are still lacking, and the aim of this paper is precisely moving forward in this direction by proposing a stochastic method for this purpose. Stochastic methods were not proposed to compete in the same problems as deterministic methods. For small-sized problems, deterministic methods are the best approach. But for large scale problems, stochastic methods are often the only alternative, which is basically due to their embarrassingly parallel nature. In the continuum, almost all non-linear parabolic solvers in $d\geq 100$ Euclidean dimensions are based as a rule on some stochastic representation of the solution \cite{richter2021solving}. This has also been the trend in fractional parabolic problems. There has been a growing interest in Monte Carlo F-PDE solvers \cite{gulian2018stochastic} and parameter estimation techniques of fractional diffusion models \cite{kubiliusParameterEstimationFractional2017} through stochastic calculus techniques that go well beyond the standard Itô calculus. We also acknowledge that in recent years deep learning techniques have also exhibited remarkable results in high dimensions (we refer back to \cite{pang2019fpinns}), mainly because of the ability of neural networks to avoid a blow-up in the number of parameters with respect to Euclidean dimensions.

Nevertheless, the scalable sensitivity computation theme in discrete ODE/F-ODE systems remains largely unexplored, despite its importance. One example is graph Laplacian learning through diffusion ODE models \cite{thanou2017learning}. The regularized optimization algorithm proposed in that paper shows excellent F-measure results in reconstructing a graph from observations in the nodes, but this is merely for $n=20$ nodes in synthetic graphs and $n\leq 30$ nodes in real-world graphs. The computational complexity of the algorithm, being of order of $\mathcal{O}(n^3)$, makes it practically unfeasible to be used in experiments consisting in typical real-life social networks of sizes beyond $n\gg 10^6$ nodes \cite{10.1145/2567948.2576939}. In the fractional setting, as we mentioned above, scalability issues are even magnified, and still research on this topic is scarce. In fact, to the best of our knowledge, no algorithm has been proposed yet capable of solving the graph learning problem proposed in \cite{thanou2017learning} when characterized by an inhomogeneous subdiffusive model.

For this purpose we propose here a random walk-based algorithm capable of computing in parallel one entry of the solution of ODE/F-ODE systems and all their respective parameter sensitivities with a negligible memory cost per individual simulation with respect to time. Note that this is specially useful in inverse problems, since in a realistic experimental setting, the measurements are typically done at just a few nodes of the mesh. To cite a few examples, see current experiments in anomalous transport in porous media \cite{cortis2005computing, maryshev2013adjoint}, and heat transfer \cite{martin2015bayesian}, which are both described in the anomalous case by linear F-PDEs, as well as the computation of greeks in financial mathematics \cite{giles2005smoking}, e.g.  This has been accomplished by embedding the fractional nature of the problem directly onto the simulations, now being non-Markovian. Our work is based on previous work on stochastic matrix function methods \cite{acebron2019monte,acebron2020highly,guidotti2024stochastic,guidotti2024fast} that compute the solution at single nodes by using a recursive integral representation of the solution. Sensitivities are obtained simultaneously with the solution through either Malliavin weights or smooth perturbation analysis. This method is unbiased, meaning that, assuming perfect computation of the scalar Mittag-Leffler function and its scalar derivatives, the expected value of the algorithm is exactly the true solution.

The layout of this work is as follows. In Section \ref{background} we review the existing knowledge on stochastic methods for ODEs. Here we also discuss the failure of some classical sensitivity methods for PDEs when dealing with the discrete case, and describe some alternative methods that can be used to overcome this failure. In Section \ref{frac-lin-sys} we formulate the class of F-ODE systems we solve, and describe the inverse problems we intend to tackle.
Section \ref{stochastic-passes} describes the stochastic representation for both the solution and their derivatives, and provides an analysis of the variance of the methods and their time complexity. In Section \ref{numeric-results}, we numerically assess the validity of our algorithm through hypothesis testing for some generated random problems, and close showing the results corresponding to a test case application, which are in agreement with those obtained using a deterministic method. Finally, in Section \ref{conclusions}, we present our conclusions and outline the next steps to be taken.

\section{Background}
\label{background}

\subsection{Stochastic methods for computing the solution of differential equations}
As established in the '50s through the celebrated Feynman-Kac formula \cite{Karatzas1998}, there exists a strong connection between the theory of stochastic process (in particular, the diffusion process) and PDEs, offering a probabilistic interpretation to the solution of these equations. The function  $u(\mathbf{x},t)$, with ${\bf x} \in {\mathbb{R}}^n$, satisfying the Cauchy problem for the linear parabolic partial differential equation,
\begin{equation}
    \label{eq:Feynman-Kac}
    \frac{\partial u}{\partial t}=\mathcal{L}u-c(\mathbf{x},t)u,\qquad u(\mathbf{x},0)=f(\mathbf{x})
\end{equation}
can be represented probabilistically as
\begin{equation}
    \label{eq:Feynman-Kac_sol}
    u(\mathbf{x},t)=\mathbb{E}\left(f(\mathbf{X}_t)\exp\left(-\int_0^t d\tau\; c(\mathbf{X}_s,t-\tau) \right)\mid \mathbf{X}_0=\mathbf{x}\right).
\end{equation}
Here $\mathcal{L}$ is a linear elliptic operator, and $c({\bf x},t) \geq 0$  and $f({\bf x})$ represent the continuous bounded coefficients and initial condition, respectively. Concerning $\mathbf{X}_t$, this corresponds to the $n$-dimensional stochastic 
process starting at $\mathbf{X}_0=\mathbf{x}$ and the expected values are taken with respect to the corresponding measure. Such a stochastic process is in general the solution of the system of  
stochastic differential equations (SDEs) of the Itô type, related to the 
elliptic operator, 
\begin{equation}
        d {\bf X}(t) = {\bf b}(t,{\bf x}) \, dt + {\bm \sigma}(t,{\bf x}) \, d {\bf W}(t).
                                                         \label{SDE}
\end{equation}  
Here ${\bf W}(t)$ represents the $n$-dimensional standard Wiener process, ${\bf b}$ the drift , and ${\bm \sigma}$ the diffusion matrix, which are related to the  coefficients of the elliptic operator $\mathcal{L}$.

Although much less known, there also exists a probabilistic representation for the case of first-order ODEs. In fact, it was proved in \cite{acebron2020highly} that the solution of the initial value problem for the system of ODEs
\begin{equation}
    \frac{d\mathbf{u}}{dt}=\mathbf{Q}\mathbf{u},\quad \mathbf{u}(0)=\mathbf{u}_0,
\end{equation}
can be represented probabilistically as follows
\begin{equation}
    u_i(t)=\mathbb{E}\left(u_{X_t}(0) \mid X_0=i\right).
\end{equation}
Here $\mathbf{u}=(u_i)_{i=1,\ldots,n}$ and $\mathbf{u}(0)=(u_i(0))_{i=1,\ldots,n}$ are $n$-dimensional vectors, and $\{X_t\colon t\ge 0\}$ a stochastic process with finite state space $\Omega=\{1,2,\dots,n\}$ corresponding to a continuous-time Markov chain (CTMC) generated by the infinitesimal generator $\mathbf{Q}=(q_{ij})_{i,j=1,\ldots,n}$.
This result can be further generalized to deal with another class of ODEs through the following theorem:
\begin{theorem}
    A stochastic representation of the system of ODEs 
\begin{equation}
    \frac{d\mathbf{u}}{dt}=\mathbf{Q}\mathbf{u}+\mathbf{D}\mathbf{u},\quad \mathbf{u}(0)=\mathbf{u}_0,
\end{equation}
with $\mathbf{D}=(d_{ij})_{i,j=1,\ldots,n}$ an arbitrary diagonal matrix, is given by
\begin{equation}
    \label{eq:discrete-feynman-kac}
    u_i(t)=\mathbb{E}\left(u_{X_t}(0)\exp\left(\int_0^t d\tau\;d_{X_\tau,X_\tau}\right) \mid X_0=i\right)
\end{equation}
\end{theorem}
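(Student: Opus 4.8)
The plan is to follow the classical Feynman--Kac strategy adapted to a finite state space: define the right-hand side as a candidate function, show it solves the same initial value problem as $\mathbf{u}$, and invoke uniqueness for the linear constant-coefficient system. Concretely, I would set
\[
v_i(t) = \mathbb{E}\left(u_{X_t}(0)\exp\left(\int_0^t d_{X_\tau,X_\tau}\,d\tau\right)\mid X_0 = i\right),
\]
and aim to prove $\frac{d\mathbf{v}}{dt} = \mathbf{Q}\mathbf{v} + \mathbf{D}\mathbf{v}$ with $\mathbf{v}(0) = \mathbf{u}_0$. The initial condition is immediate: at $t=0$ the integral vanishes, the exponential equals one, and $X_0 = i$ forces $v_i(0) = u_i(0)$.

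The core step is a short-time expansion obtained via the Markov property. Since the integral in the exponent is additive in time and $d_{X_\tau,X_\tau}$ depends only on the current state, the pair (state, accumulated weight) is Markov, so I can split the integral at an intermediate time $h$ and condition on $X_h$. Using time-homogeneity, the inner conditional expectation given $X_h = j$ is exactly $v_j(t)$, yielding $v_i(t+h) = \mathbb{E}\!\left(\exp\!\left(\int_0^h d_{X_\tau,X_\tau}\,d\tau\right) v_{X_h}(t)\mid X_0 = i\right)$. I would then expand this in $h$ using the transition law of the CTMC: with probability $1 + q_{ii}h + o(h)$ the chain remains at $i$, contributing $(1 + q_{ii}h)(1 + d_{ii}h)v_i(t)$; with probability $q_{ij}h + o(h)$ it jumps to $j \neq i$, contributing $q_{ij}h\,v_j(t)$ (the weight there being $1 + O(h)$); and multiple jumps contribute $o(h)$. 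Collecting terms gives $v_i(t+h) = v_i(t) + h\bigl(\sum_j q_{ij}v_j(t) + d_{ii}v_i(t)\bigr) + o(h)$, hence the claimed ODE.

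I expect the main obstacle to be the rigorous justification of the $o(h)$ bookkeeping in the short-time expansion: one must verify that the exponential weight accrued before a jump contributes only at order $O(h)$ to the off-diagonal terms, that the probability of two or more jumps in $[0,h]$ is genuinely $o(h)$, and that the conditional expectations are uniformly bounded so the remainders are controlled -- here the finiteness of $\Omega$ and boundedness of the $d_{ii}$ make this routine, but it still needs stating. A cleaner but more algebraic alternative would be to expand the matrix exponential $\mathbf{u}(t) = e^{(\mathbf{Q}+\mathbf{D})t}\mathbf{u}_0$ as a Dyson (time-ordered) series and match it term by term against the expectation, writing the exponential functional as a sum over jump trajectories; this sidesteps the infinitesimal argument at the cost of more combinatorial care. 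Either way, once $\mathbf{v}$ and $\mathbf{u}$ solve the same linear system with identical initial data, uniqueness of solutions closes the proof.
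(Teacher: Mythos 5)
Your proposal is correct, but it takes a genuinely different route from the paper. The paper's proof is a sketch based on operator splitting: it writes $\mathbf{u}(t)=e^{t(\mathbf{D}+\mathbf{Q})}\mathbf{u}(0)$, applies the Strang splitting $e^{(\mathbf{D}/2+\mathbf{Q}+\mathbf{D}/2)\Delta t}=e^{\mathbf{D}\Delta t/2}e^{\mathbf{Q}\Delta t}e^{\mathbf{D}\Delta t/2}+\mathcal{O}(\Delta t^3)$ over $N_t$ subintervals, identifies the action of $e^{\mathbf{Q}\Delta t}$ with one step of the CTMC, and argues that as $\Delta t\to 0$ the product of diagonal factors becomes the exponential of the integral of $d_{X_\tau,X_\tau}$ along the path. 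You instead run the classical Feynman--Kac verification: define the candidate $\mathbf{v}$, use the Markov property and a short-time expansion to show it satisfies the Kolmogorov backward equation $\frac{d\mathbf{v}}{dt}=(\mathbf{Q}+\mathbf{D})\mathbf{v}$ with $\mathbf{v}(0)=\mathbf{u}_0$, and conclude by uniqueness for linear constant-coefficient systems. The trade-off: the paper's splitting argument is constructive and mirrors the simulation viewpoint (it shows \emph{how} the path functional emerges from the product of exponentials), but as presented it is only a sketch whose limit $\Delta t\to 0$ is not rigorously controlled; your generator-plus-uniqueness argument is self-contained and easy to make fully rigorous on a finite state space, since boundedness of the $d_{ii}$ and of the jump rates makes the $o(h)$ estimates routine, though you should indeed state explicitly (as you anticipate) that multiple jumps occur with probability $o(h)$ and that a continuous one-sided derivative suffices to conclude differentiability. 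Either proof establishes the theorem; yours is arguably the more standard and complete one.
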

\begin{proof} Here we describe a sketch of the proof which is mostly based in \cite{acebron2020highly}. Using the Strang splitting for the matrix exponential 
\begin{equation}
        e^{(\mathbf{D}/2+\mathbf{Q}+\mathbf{D}/2)\Delta t}=e^{\mathbf{D}/2\Delta t}e^{\mathbf{Q}\Delta t}e^{\mathbf{D}/2\Delta t}+\mathcal{O}(\Delta t^3),
\end{equation} 
the time interval $[0,t]$ is divided in $N_t$ equispaced subintervals of size $\Delta t$, with $\Delta t=t/N_t$, $t_i=i\Delta t, i=0,\ldots,N_t$  and then, the solution of the system, $\mathbf{u}(t)=\exp\{t(\mathbf{D}+\mathbf{Q})\}\mathbf{u}(0)$, is approximated as follows
    \begin{equation}
        \mathbf{u}(t)=\left(\prod_{i=1}^{N_t} e^{\mathbf{D}/2\Delta t}e^{\mathbf{Q}\Delta t}e^{\mathbf{D}/2\Delta t}\right)\mathbf{u}(0)+\mathcal{O}(\Delta t^2).
    \end{equation}
    Note that in this approximation the action of $e^{\mathbf{Q}\Delta t}$ over a vector induces the time-evolution of the Markov chain, and when taking the limit $\Delta t \rightarrow 0$ this can be seen as the integration of $e^{\mathbf{D}/2\Delta t}$ through the CTMC path. In this limit the approximation converges to the solution and we recover therefore Equation (\ref{eq:discrete-feynman-kac}) due to the Markovianity of the stochastic process.
\end{proof}
Note that this stochastic solution for a system of ODEs resembles the Feynman-Kac formula for PDEs in Equation (\ref{eq:Feynman-Kac_sol}), where the diagonal matrix $\mathbf{D}$ plays the role of the coefficient $c(\mathbf{x},t)$ in the discrete setting.

These results can be generalized even further for a more general class of system of first-order ODEs such as
\begin{equation}
    \frac{d\mathbf{u}}{dt}=\mathbf{A}\mathbf{u},\quad \mathbf{u}(0)=\mathbf{u}_0, 
\end{equation}
where $\mathbf{A}=(a_{ij})_{i,j=1,\ldots,n}$ is a matrix with $a_{ii}<0$.
The stochastic representation becomes now more involved, since it is now required to take the account the sign of each matrix entries $a_{ij}$. This can be seen readily from the integral representation of the solution, which is given by
\begin{align}
    &u_i(t)=e^{a_{ii}t}u_i(0)\nonumber\\
    &+\sum_{k\neq i}\frac{\mathrm{sgn}(a_{ik})a_{ik}}{\sum_{j\neq i}\mathrm{sgn}(a_{ij})a_{ij}}\int_0^t d\tau\;\frac{\mathrm{sgn}(a_{ik})\sum_{j\neq i}\mathrm{sgn}(a_{ij})a_{ij}}{a_{ii}}(a_{ii}e^{a_{ii}\tau})u_k(t-\tau).
\end{align}
Therefore, the solution can be represented probabilistically as the expected value of a multiplicative functional over a path of a CTMC, which we denote hereafter as $\Gamma_t$, with the infinitesimal generator now given by
\begin{equation}
    q_{ij} = \begin{cases} 
        a_{ii}, & \text{if } i = j \\ 
        \dfrac{\mathrm{sgn}(a_{ik}) a_{ik}}{\sum_{j \neq i} \mathrm{sgn}(a_{ij}) a_{ij}}, & \text{if } i \neq j 
    \end{cases}
\end{equation}

\subsection{Stochastic methods for the sensitivity analysis of differential equations}

Differentiating numerical methods based on solving stochastic differential equations (SDEs) with respect to certain parameters is typically straightforward, since the state-space $\Omega\subset \mathbb{R}^n$ of the stochastic process is continuous, and small changes in the parameters often lead to small changes in the output.  But not any method of approximating the derivatives work. In fact,
assume $\theta\in \mathbb{R}$ a given parameter, and $V_{\theta}(\mathbf{x},t)$ the stochastic solution of the Equation (\ref{eq:Feynman-Kac}) with zero coefficient $c$ obtained through the Feynman-Kac formula as 
\begin{equation}
    V_{\theta}(\mathbf{x},t)=\mathbb{E}(f(\mathbf{X}(\theta)_{N_t})\vert \mathbf{X}(\theta)_{0} = \mathbf{x}) 
\end{equation}
where $\mathbf{X}(\theta)_{N_t}$ corresponds to the numerical solution of the SDEs obtained using an explicit Euler-Mayurama scheme with time-step $\Delta t$ \cite{gobet2016monte}, which is given by
\begin{equation}
    \label{eq:euler-mayurama}
    \mathbf{X}(\theta)_{i+1}=\mathbf{b}_{\theta}(t_i,\mathbf{X}(\theta)_{i})h+\bm{\sigma}_{\theta}(t_i,\mathbf{X}(\theta)_{i})\Delta{\bf W}_i\sqrt{h}, \quad i=0,\ldots,N_t.
\end{equation}
Here $\Delta {\bf W}_i={\bf W}_{i+1}-{\bf W}_i$ are independent and identically distributed unit normal random variables $\mathcal{N}(0,\mathbf{I})$.
To evaluate $\partial_\theta V_\theta(\mathbf{x},t)$, the often portrayed as naïve solution is to use as an estimator of the derivative a finite-difference scheme, which entails in practice running two different simulations for two different parameter values one perturbed by $\Delta \theta$ as follows
\begin{equation}
    \partial_\theta V_{\theta}(\mathbf{x},t)\approx\frac{1}{\Delta \theta}\left(\mathbb{E}(g(\mathbf{X}(\theta+\Delta \theta)_{N_t})\vert \mathbf{X}(\theta)_{0} = \mathbf{x})-\mathbb{E}(g(\mathbf{X}(\theta)_{N_t})\vert \mathbf{X}(\theta)_{0} = \mathbf{x})\right)
\end{equation}

However, this estimator suffers from the well-known bias-variance tradeoff, being the variance of order of $\mathcal{O}((\Delta \theta)^{-2})$ \cite{l1992convergence}. A solution is to use the so-called {\it reparametrization trick} \cite{mohamed2020monte}. The idea of the  reparametrization trick is to couple both simulations using the same random points $\omega$, and then take the limit $\Delta \theta \rightarrow 0$~\cite{mohamed2020monte} once the simulations are completed. For the solution of the SDE, this corresponds to using the same Gaussian increments for both simulations, one with parameter $\theta$, and the other one with $\theta+\Delta \theta$. For the first case, the solution is  $\mathbf{X}(\theta;\omega)_{N_t}$, while for the second case $\mathbf{X}(\theta+\Delta \theta;\omega)_{N_t}$. Under mild regularity assumptions it holds that $\Vert \mathbf{X}(\theta+\Delta \theta;\omega)_{N_t}-\mathbf{X}(\theta;\omega)_{N_t}\Vert=\mathcal{O}(\Delta \theta)$. Assuming also some regularity for $\mathbf{b}_{\theta}$ and $\bm{\sigma}_{\theta}$, the quantity $\partial_{\theta}\mathbf{X}(\theta;\omega)_{N_t}$ exists for a fixed $\omega$, and thus, making use of the dominated convergence theorem and the chain rule, we obtain an estimator for the sensitivities of
 $\partial_{\theta}V_{\theta}(x,t)$ \cite{giles2005smoking}:
\begin{equation}
    \partial_{\theta} V_{\theta}(\mathbf{x},t)=\mathbb{E}(\partial_{\theta} g(\mathbf{X}(\theta)_{N_t})\vert \mathbf{X}(\theta)_{0} = \mathbf{x}) 
\end{equation}
However, swapping both limits---the expected value, which is an integral, and the derivative
defined as limits which do not necessarily commute---does not hold for the ODE case, since the state-space for the stochastic representation of the solution is now discrete. We cannot fix $\omega$ and differentiate as it was done before, mainly because the estimator is not continuous with respect to $\theta$ anymore. Therefore, the reparametrization trick fails for this discrete setting. What follows is a simple example to illustrate the failure.

Let $X_\theta$ be an exponentially distributed random variable $X_\theta\sim p_\theta(x)=\theta e^{-\theta x}$. This corresponds indeed to the nature of the random variable used in generating the CTMC path. Let's suppose we want to compute the sensitivity of $\mathbb{E}(X_\theta)$ using the reparametrization trick. As we would do in an implementation, we separate the source of randomness from the parameter $\theta$ as follows.
\begin{equation}
    \label{reparam-continuous}
    X_\theta= -\frac{1}{\theta}\log(U),\qquad U\sim\mathcal{U}(0,1)
\end{equation} then, as the sequence of quotients that yield the derivative is bounded by an integrable function $\int_0^1 du\;\vert \log(u)\vert =1$ for small enough $\Delta \theta$
\begin{equation}
    f_{\Delta \theta}(u)=\frac{-\log(u)}{\theta+\Delta \theta}-\frac{-\log(u)}{\theta}=\frac{\log(u)\Delta \theta}{\theta(\theta+\Delta \theta)}\leq \log(u)
\end{equation}
we can apply the dominated convergence theorem \cite{mohamed2020monte} and swap the limits
\begin{equation}
    \partial_\theta \mathbb{E}(X_\theta)=\int_0^1 du\;\partial_\theta \frac{-\log(u)}{\theta}=\int_0^1 du\; \frac{\log(u)}{\theta^2}
\end{equation}
thus obtainig an estimator for both the expected value and the sensitivity using only single random number stream $U\sim \mathcal{U}(0,1)$.
\begin{equation}
\mathbb{E}(X_\theta)=\mathbb{E}_U\left(\frac{-\log(U)}{\theta}\right),\quad \partial_\theta \mathbb{E}(X_\theta)= \mathbb{E}_U\left(\frac{\log(U)}{\theta^2}\right)=-\frac{1}{\theta}\mathbb{E}(X_\theta)
\end{equation}
Now consider a new random variable $Y_\theta$ that depends on $X_\theta$ through an indicator function.
This corresponds to the simplest form of a two state $\bm{S}=\{0,1\}$ CTMC observed at time $T=\sqrt{2}$, starting at state $1$.
\begin{equation}
\label{reparam-discrete}
Y_\theta = \mathbf{1}_{(\sqrt{2},\infty)}(X_\theta)=\mathbf{1}_{(\sqrt{2},\infty)}\left(\frac{-\log(U)}{\theta}\right) 
\end{equation}
The reparametrization trick in this case clearly yields a non-differentiable, piecewise constant function of the parameter $\theta$ for a fixed realization $U$, even though the expected value of the random variable is actually a differentiable function of the parameter $\theta$ given by $\mathbb{E}(Y_\theta)=\mathbb{P}(X_\theta>\sqrt{2})=e^{-\theta \sqrt{2}}$.

This is indeed the case of the stochastic solution for the ODEs. Fortunately, many strategies have been proposed by the discrete event systems (DES) community for dealing with far more complex stochastic processes than those found here.  The main two pathways that we use are conditional Monte Carlo (CMC) \cite{fu_conditional_1997}---also known as smooth perturbation analysis---and the score function/likelihood/Malliavin weights method \cite{glynn1995likelihood}, having also considered measured-valued and stochastic automatic differentiation \cite{heidergott2010gradient,arya2022automatic}.
In this work, we use the Malliavin weights technique as it has been proven effective in SDEs \cite{warren2013malliavin} and non-Markovian random walks \cite{ertel2022operationally}. Then, for the time sensitivity, we opt for CMC as it has been already successfully applied to performance measures of generalized semi-Markov processes (GSMP) in Chapter 3 of~\cite{fu_conditional_1997}.



\section{Mathematical preliminaries}
\label{frac-lin-sys}
We consider the following system of fractional differential equations. In our notation, an $i$-th subscript of a vector is the projection onto the $i$-th canonical basis vector $e_i$ in $\mathbb{R}^n$, $v_i:=\langle \mathbf{v},e_i\rangle $. Then, for each coordinate $i\in\{1,\dots,n\}=\bm{S}$, the time-evolution of the system is described by
\begin{equation}
    ^C_0 D_t^{\alpha_i} u_i(t) = (\mathbf{A}\mathbf{u})_i,\quad \mathbf{u}(0)=\mathbf{u}_0\in\mathbb{R}^n,\; \mathbf{A}\in\mathbb{R}^{n\times n},\; \bm{\alpha}\in(0,1)^n
    \label{main}
\end{equation}
where the derivative in time is given by the \textit{Caputo fractional derivative} of order $\alpha\in(0,1)$
\begin{equation}
\label{caputo-derivative}
    ^C_0 D_t^{\alpha_i} f(t) = \frac{1}{\Gamma(1-\alpha_i)}\int_0^t ds\;\frac{f'(s)}{(t-s)^{\alpha_i}}
\end{equation}
which acts with a different fractional exponent in each coordinate. In Section \ref{stochastic-passes}, the set of Euclidean coordinates $\bm{S}$ corresponds in practice to the state space of a suitable random walk, so hereafter we shall also refer to $\bm{S}$ as the set of nodes. $\Gamma(\alpha)$ is the well-known gamma function\footnote{Letter $\Gamma$ only denotes the gamma function in this section. To be coherent with notation in \cite{ertel2022operationally}, which is one of the relevant references for this work, $\Gamma_t$ will denote the random walk process as defined in Section \ref{stochastic-forward}.} which extends the factorial to complex numbers. We denote the complete vector of fractional exponents as $\bm{\alpha}=\{\alpha_i\}_{i=1}^n\in(0,1)^n$. With a varying fractional exponent through space, we intend to model a more heterogeneous environment characterized physically by a spatially varying anomalous diffusion. In a discrete setting, this corresponds to assuming a different $\alpha_i$ in each coordinate $i$.

The Caputo derivative can be rewritten as the Laplace convolution of the classical differential operator with a fractional integral kernel\footnote{We use notation $I^\alpha$ instead of $J^\alpha$ for the Liouville integral due to future usage of $J$ as a functional $J_T$ over a path $\Gamma_t$, once again retaining notation from our main references.} \cite{gorenflo1997fractional}
\begin{equation}
\label{convolution-representation}
    ^C_0 D_t^{\alpha} f(t) = I^{1-\alpha} D f=\frac{t_+^{\alpha}}{\Gamma(1-\alpha)}*\frac{\partial f(t)}{\partial t}
\end{equation}
where the suffix $+$ indicates that the kernel vanishes for $t<0$. Swapping the order of the operators to $DI^{1-\alpha}$ results in a different formulation of the fractional derivative called \textit{Riemann-Liouville derivative}. Unlike both fractional derivatives, the fractional integral kernel $\phi_\alpha (t)=t^{\alpha-1}_+/\Gamma(\alpha)$ fulfills the semigroup property $\phi_\alpha (t)*\phi_\beta (t)=\phi_{\alpha+\beta} (t)$ as well as a convenient representation in Laplace space, where $\div$ denotes representation in Laplace space.
\begin{equation}
\label{mittag-laplace-representation}
    \phi_\alpha(t)\div \frac{1}{s^\alpha}\Longrightarrow I^{\alpha}f(t)\div \frac{\Tilde{f}(s)}{s^\alpha}
\end{equation}
We also make use of the following representations in Laplace space of the Mittag-Leffler function.
\begin{equation}
\label{laplace-pairs}
    E_\alpha(-\lambda t^\alpha)\div \frac{s^{\alpha-1}}{s^\alpha+\lambda},\quad t^{\beta-1}E_{\alpha,\beta}(-\lambda t^\alpha)\div\frac{s^{\alpha-\beta}}{s^\alpha+\lambda},\quad Re (s)>\vert \lambda \vert^{1/\alpha}
\end{equation}
$E_\alpha$ is an abbreviation for $E_{\alpha,1}$, as we can see in their Laplace space representation. For an in-depth exposition of these topics, see \cite{gorenflo1997fractional}.

The solution to the F-ODE (linear) system (\ref{main}) at time $t$ is given by $\mathbf{u}(t)=\mathbf{E}(t)\mathbf{u}(0)$, where $\mathbf{E}(t)$ is an unknown matrix. When the fractional exponent is homogeneous in space ($\bm{\alpha}=\alpha \mathbf{1}$), the unknown matrix is given by the Mittag-Leffler matrix function $
\mathbf{E}(t)=E_\alpha(\mathbf{A}t^\alpha)
$. However, when the fractional exponent is heterogeneous, there is no closed-form solution to the $\mathbf{E}(t)$
to the best of our knowledge.

Many of the existing numerical methods for an heterogeneous exponent attempt to discretize in time either the Caputo derivative (\ref{caputo-derivative}) as in the well-known L1 scheme \cite{yan2018analysis} or a Volterra reformulation of Equation (\ref{main}) as in \cite{diethelm2003efficient}, and then solve the underlying algebraic problem accordingly.

There is virtually no established research on explicit and scalable sensitivity computation for fractional linear models. Existing methods, such as reverse-mode automatic differentiation \cite{innes2018don,moses2021reverse} and the adjoint method \cite{maryshev2013adjoint,caputo2015duality,hendy2022cole,antil2020fractional}, avoid explicit computation of the sensitivity matrix and, despite being considered efficient, these approaches still face significant scalability challenges when addressing large-scale problems

We propose a stochastic method based on an unbiased estimator of a single row of the unknown matrix
\begin{equation}
    \label{matrix-row}
    \left[\mathbf{E}(t)_{ij} \right]_{1\leq j \leq n}
\end{equation}
which returns the scalar product with the initial condition to yield the solution at time $t=T$
\begin{equation}
    u_i(T)=\sum_{j=1}^n \mathbf{E}(T)_{ij}u_j(0)
\end{equation} 
Our algorithm also provides a computationally efficient technique capable of explicitly estimating the following four sensitivities with a negligible cost in terms of memory.
\begin{equation}
\label{sensitivities}
    \left[\frac{\partial u_i(T)}{\partial a_{jk}}\right]_{1\leq j,k\leq n} \quad\left[\frac{\partial u_i(T)}{\partial u_j(0)}\right]_{1\leq j\leq n}\quad \left[\frac{\partial u_i(T)}{\partial \alpha_{j}}\right]_{1\leq j\leq n} \quad\frac{\partial u_i(T)}{\partial T}
\end{equation}
(Notice that the $\partial u_i(T)/\partial u_j(0)$ sensitivity corresponds to $\mathbf{E}(t)_{ij}$.)

We believe this method combines the benefits of forward-mode differentiation, as it can output the sensitivity matrix itself, and backward-mode differentiation,  by allowing runtime multiplication of the sensitivity estimators at (\ref{sensitivities}) with any desired quantity. This multiplication is exceptionally fast because, as we will see, only the quantities involved in each random walk path have non-zero contribution to the sensitivities. For example, if, for a particular random walk, the spatial path is given by $I=\{S_1,S_2,\dots,S_{1+\nu_T}\}$, then, for all $j\notin I$, the estimator of $\partial u_i(T)/\partial \alpha_j$ is zero. Consequently, the randomized vector-Jacobian products derived from a reasonable number of random walks tend to be highly sparse. This sparsity can then be leveraged to achieve extremely fast stochastic gradient descent through the massive parameter space of the system.

\section{Description of the stochastic method}\label{stochastic-passes}

\subsection{Stochastic representation of the solution}
\label{stochastic-forward}
To derive a stochastic solution to Equation (\ref{main}), it is essential to rewrite first such an equation as an integral equation. This is done through the following theorem. 
\begin{theorem}
An integral representation to Equation (\ref{main}) with $a_{ii}\ne 0$ is given by
\begin{align}
\label{integral-representation}
&u_i(T)=\mathbb{P}(\xi_*\geq T)\frac{E_{\alpha_i}(a_{ii}T^{\alpha_i})}{\mathbb{P}(\xi_*\geq T)}u_i(0)+\nonumber\\&\sum_{k\neq i}\mathbb{P}(\mathcal{J}_i=k)\int_0^T d\tau \;p(\tau;\xi_*)\frac{-a_{ii}\tau^{\alpha_i-1}E_{\alpha_i,\alpha_i}(a_{ii}\tau^{\alpha_i})}{p(\tau;\xi_*)}\chi(i,k)u_k(T-\tau) 
\end{align}
where $p(\tau,\xi_*)$ and $\mathbb{P}(\xi_*\geq T)$ are respectively the density function and the survival function for an auxiliary random variable $\xi_*$ supported on $\mathbb{R}_{\geq 0}$ absolutely continuous with respect to the Lebesgue measure.

$\mathcal{J}_i$ is a discrete random variable taking values in all states $\{1,\dots,n\}\backslash \{i\}$ and describes the probability of jumping from state $i$ to any other state $k$
\begin{equation}
\mathbb{P}(\mathcal{J}_i=k)=\frac{\vert a_{ik}\vert}{\sum_{j\neq i}\vert a_{ij}\vert}=\frac{\mathrm{sgn}(a_{ik})a_{ik}}{\sum_{j\neq i}\mathrm{sgn}(a_{ij})a_{ij}}
\end{equation}
and $\chi(i,k)$ is a multiplicative signed variable updated according to the jump of the random walk 
\begin{equation}
    \chi(i,k)=-\frac{{\rm sgn}(a_{ik})\sum_{j\neq i}\vert a_{ij}\vert}{a_{ii}}=-\frac{\mathrm{sgn}(a_{ik})\sum_{j\neq i}\mathrm{sgn}(a_{ij})a_{ij}}{a_{ii}}
\end{equation}
\end{theorem}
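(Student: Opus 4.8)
The plan is to collapse the coupled fractional system onto a scalar fractional relaxation equation with a forcing term, solve that scalar equation in Laplace space using the Mittag-Leffler pairs already recorded in (\ref{laplace-pairs}), and then rewrite the resulting convolution in the probabilistic form of (\ref{integral-representation}). First I would isolate coordinate $i$ in (\ref{main}) by splitting the right-hand side into its diagonal and off-diagonal parts, writing $^C_0 D_t^{\alpha_i} u_i(t) = a_{ii}u_i(t) + f_i(t)$ with the coupling $f_i(t)=\sum_{k\neq i}a_{ik}u_k(t)$. Since the heterogeneity of $\bm{\alpha}$ rules out a matrix closed form, I would deliberately keep $f_i$ as an (unknown) source, so that the identity produced is a self-consistent Volterra equation rather than an explicit solution, which is exactly the integral representation the theorem asserts.

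Next I would apply the Laplace transform. Using the transform of the Caputo derivative (\ref{caputo-derivative}) for $\alpha_i\in(0,1)$, namely $\mathcal{L}\{{}^C_0 D_t^{\alpha_i}u_i\}(s)=s^{\alpha_i}\tilde u_i(s)-s^{\alpha_i-1}u_i(0)$, the scalar equation becomes algebraic and solves to
\[
\tilde u_i(s)=\frac{s^{\alpha_i-1}}{s^{\alpha_i}-a_{ii}}\,u_i(0)+\frac{1}{s^{\alpha_i}-a_{ii}}\,\tilde f_i(s).
\]
Inverting term by term with the pairs (\ref{laplace-pairs}) evaluated at $\lambda=-a_{ii}$ — the first pair returns $E_{\alpha_i}(a_{ii}t^{\alpha_i})$, and the second, specialized to $\beta=\alpha_i$, returns the Green's kernel $t^{\alpha_i-1}E_{\alpha_i,\alpha_i}(a_{ii}t^{\alpha_i})$ — together with the convolution theorem applied to $\frac{1}{s^{\alpha_i}-a_{ii}}\tilde f_i(s)$, and then changing variable $\tau\mapsto t-\tau$ in the convolution, yields
\[
u_i(t)=E_{\alpha_i}(a_{ii}t^{\alpha_i})u_i(0)+\int_0^t \tau^{\alpha_i-1}E_{\alpha_i,\alpha_i}(a_{ii}\tau^{\alpha_i})\sum_{k\neq i}a_{ik}u_k(t-\tau)\,d\tau,
\]
whose structure already matches (\ref{integral-representation}) at $t=T$.

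To finish I would absorb the coefficient $a_{ik}$ into the stated probabilistic factors by checking the algebraic identity $a_{ik}=\mathbb{P}(\mathcal{J}_i=k)\,(-a_{ii})\,\chi(i,k)$: substituting the definitions, the common factor $\sum_{j\neq i}\mathrm{sgn}(a_{ij})a_{ij}$ and the $a_{ii}$ cancel, the two sign changes combine to $+1$, and $\mathrm{sgn}(a_{ik})^2=1$, leaving exactly $a_{ik}$. I would then insert the survival function $\mathbb{P}(\xi_*\geq T)$ and the density $p(\tau;\xi_*)$ as multiply-and-divide factors on the first and second terms; these cancel identically, leaving the representation unchanged, but they set up the importance-sampling form that the stochastic algorithm later exploits.

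The routine Laplace bookkeeping aside, the step requiring genuine care is the term-by-term inversion: I must justify that the pairs in (\ref{laplace-pairs}) are applicable in the half-plane $\mathrm{Re}(s)>|a_{ii}|^{1/\alpha_i}$, that the convolution theorem is licit for the forcing term $\tilde f_i$, and that the finite sum over $k$ commutes with the inverse transform. Conceptually, however, the decisive move is the initial one — recognizing that the spatially varying exponent forbids a matrix Mittag-Leffler solution and instead forces one to treat the off-diagonal coupling as a source, so that each coordinate obeys a scalar fractional relaxation equation whose Green's function is precisely the kernel $\tau^{\alpha_i-1}E_{\alpha_i,\alpha_i}(a_{ii}\tau^{\alpha_i})$ appearing above.
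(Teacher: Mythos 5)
Your proposal is correct and follows essentially the same route as the paper: split off the diagonal term, solve for $\tilde u_i(s)$ in Laplace space, invert term by term with the Mittag-Leffler pairs, and then multiply-and-divide by the density/survival factors, using the identity $a_{ik}=\mathbb{P}(\mathcal{J}_i=k)(-a_{ii})\chi(i,k)$ to produce the stated probabilistic form. The only cosmetic difference is that you quote the standard Laplace transform of the Caputo derivative, $s^{\alpha_i}\tilde u_i(s)-s^{\alpha_i-1}u_i(0)$, whereas the paper derives the same relation inline by applying the fractional integral $I^{\alpha_i}$ and invoking the semigroup property of the kernel $\phi_{\alpha_i}$ before transforming.
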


\begin{proof}
We first split the F-ODE system to be solved (\ref{main}) as follows:
\begin{align}
    ^C_0 D_t^{\alpha_i} u_i(t) = \sum_{k\neq i}a_{ik}u_k(t) + a_{ii}u_i(t)
\end{align}
We now use representation (\ref{convolution-representation}) of the Caputo derivative together with the semigroup property of the fractional integral.
\begin{align}
I^{\alpha_i} (^C_0 D_t^{\alpha_i} u_i(t))&=I^{\alpha_i} I^{1-{\alpha_i}}D u=IDu=u_i(t)-u_i(0)\nonumber \\
&= \sum_{k\neq i}a_{ik}\phi_{{\alpha_i}}(t)*u_k(t) + a_{ii}\phi_{{\alpha_i}}(t)*u_i(t)
\label{to-laplace}
\end{align}

Transforming Equation (\ref{to-laplace}) into Laplace $s$-space and after some algebra, we obtain:
\begin{align}
    &\Tilde{u}_i(s)=\frac{u_i(0)}{s}+\sum_{k\neq i}a_{ik}\frac{\Tilde{u}_k(s)}{s^{\alpha_i}}+a_{ii}\frac{\Tilde{u}_i(s)}{s^{\alpha_i}}\nonumber\\
    &\Tilde{u_i}(s)\left(1-\frac{a_{ii}}{s^{\alpha_i}}\right)=\frac{u_i(0)}{s}+\sum_{k\neq i}a_{ik}\frac{\Tilde{u}_k(s)}{s^{\alpha_i}}\nonumber\\
    &\Tilde{u_i}(s)=u_i(0)\frac{s^{\alpha_i-1}}{s^{\alpha_i}-a_{ii}}+\sum_{k\neq i}a_{ik}\Tilde{u}_k(s)\frac{1}{{s^{\alpha_i}-a_{ii}}}
\end{align}
Using the Mittag-Leffler Laplace pairs (\ref{laplace-pairs}), we return to $t$-space and evaluate at $t=T$.
\begin{align}
u_i(T)&=E_{\alpha_i}(a_{ii}T^{\alpha_i})u_i(0)+\sum_{k\neq i}\int_0^T d\tau\;a_{ik}u_k(T-\tau)\tau^{\alpha_i-1}E_{\alpha_i,\alpha_i}(a_{ii}\tau^{\alpha_i})
\end{align}
Finally, each term of the equation is multiplied and divided by the survival and density function of $\xi_*$ respectively. There is no division by zero as we conveniently choose $\xi_*$ absolutely continuous with respect to the Lebesgue measure. For each summand $k\neq i$ being integrated, we multiply and divide by \begin{equation}
    \frac{-a_{ii}\;{\rm sgn}(a_{ik})}{-a_{ii}\sum_{j\neq i}\vert a_{ij}\vert}
\end{equation} to reach Equation (\ref{integral-representation}).
\end{proof}
\begin{remark}
    We write $\mathrm{sgn}(a_{ik})a_{ik}$ instead of $\vert a_{ik}\vert$ to highlight how to obtain $\vert a_{ik}\vert$ from $a_{ik}$ and to simplify taking derivatives with respect to $a_{ik}$.
\end{remark}

\begin{remark}
When $a_{ii}<0$ we can simplify Equation (\ref{integral-representation}) by taking the auxiliary random variable to be $\xi_*=\xi_i:=\xi(i)$ Mittag-Leffler distributed with rate $\lambda=-a_{ii}$ and exponent $\alpha_i$ following notation in the Laplace pairs (\ref{laplace-pairs}).
\begin{align}
\label{integral-representation-clearer}
u_i(T)=\mathbb{P}(\xi_i\geq T)u_i(0)+\sum_{k\neq i}\mathbb{P}(\mathcal{J}_i=k)\int_0^T d\tau \;p(\tau;\xi_i)\chi(i,k)u_k(T-\tau) 
\end{align}
This fact can be exploited each time the random walk enters a node $i$ with $-a_{ii}<0$ to greatly reduce computational time and variance.
\end{remark}

Through the integral equation in Equation (\ref{integral-representation}), the stochastic representation of the solution of Equation (\ref{main}) comes as follows. Let assume $a_{ii}<0$, then the solution $u_i(T)$ can be obtained
as the expected value of a functional $J_T$ acting over the paths of a continuous-time random walk \cite{ertel2022operationally,fu_conditional_1997} $\Gamma_t\colon \Omega\times [0,\infty)\rightarrow \bm{S}$ taking values in the discrete state space $\bm{S}=\{1,\dots,n\}$ with sojourn times given either by $\xi_*$ or $\{\xi_i\}_{i=1}^n$ and transition probabilities given by $\{\mathcal{J}_i\}_{i=1}^n$. The transition probabilities generate a Markovian sequence of states $\{S_j\}_{j=1}^\infty$ in the node space $\bm{S}$ and the sojourn times generate a sequence of non-negative numbers $\{\Delta \tau_j\}_{j=1}^\infty$, where each $\Delta \tau_j$ depends exclusively, in the simplified case, on $S_j$. Both sequences yield a Markov chain in the extended space $\bm{S}\times \mathbb{R}_{\geq 0}$ that fully characterizes each path of the random walk
\[
(\Gamma_t(\omega))_{t\in [0,\infty)}\equiv \{(S_j,\Delta \tau_j)\}_{j=1}^\infty
\]
and induces a density function \cite{ertel2022operationally} in the quotient pathspace $\Gamma_t\colon \Omega\times [0,T]\rightarrow \bm{S}$
\begin{equation}
    p(\Gamma_t(\omega))=\mathbb{P}(\xi(S_{1+\nu_T(\omega)})>\Delta \tau_{1+\nu_T}(\omega))\prod_{j=1}^{\nu_T(\omega)}p(\Delta \tau_j;\xi(S_j))\mathbb{P}(\mathcal{J}_{S_j}=S_{j+1})
    \label{p_gamma}
\end{equation}
where $\nu_t\colon \Omega \times [0,\infty)\rightarrow \mathbb{Z}_{\geq 0}$ is called the dual counting process to $\Gamma_t$. It indicates the number of jumps given by the path $\omega$ at time $t$, and it is formally defined as
\begin{equation}
\label{dual-jump-process}
    \nu_t(\omega)=\argmin_{i\in \mathbb{Z}_{\geq 0}} \left(\sum_{j=1}^i \Delta \tau_j \leq t\right)
\end{equation}
For the simplified case in Equation (\ref{integral-representation-clearer}), the solution to the F-ODE system at node $i$ and time $t=T$ is given by the following expected value over $\Omega$ of the paths $\Gamma_t$ generated by $\xi_j$ and $\mathcal{J}_j$ starting at node $S_1=i$.
\begin{equation}
\label{stochastic-representation-clearer}
    u_i(T)=\mathbb{E}(u_{\Gamma_T}(0)\prod_{j=1}^{\nu_T}\chi(S_j,S_{j+1})\mid S_1 = i):=\mathbb{E}(J_T(\Gamma_t)\mid \Gamma_0 = i)
\end{equation}
In the more general case, the stochastic representation is also given by the expected value over paths generated by $\xi_*$ and $\mathcal{J}_i$ (or any suitable collection of sojourn times $\{(\xi_*)_j\}$) with extra terms

\begin{align}
\label{stochastic-representation}
&u_i(T)=\mathbb{E}\left(
\frac{E_{\alpha_{S_{\nu_T}}}(a_{S_{\nu_T}S_{\nu_T}}(T-\sum_{j=1}^{\nu_T} \Delta \tau_j)^{\alpha_{S_{\nu_T}}})}
{\mathbb{P}(\xi_*\geq T-\sum_{j=1}^{\nu_T} \Delta \tau_j)} u_{\Gamma_T}(0) \times \right. \nonumber \\ 
&\left. \prod_{j=1}^{\nu_T} \frac{-a_{S_j S_j}(\Delta \tau_j)^{\alpha_{S_j}-1} E_{\alpha_{S_j},\alpha_{S_j}}(a_{S_j S_j}(\Delta \tau_j)^{\alpha_{S_j}})}{p(\Delta \tau_j;\xi_*)}
\chi(S_j,S_{j+1}) \mid S_1 = i 
\right)
\end{align}

\begin{remark}
    This technique is easily recognized as importance sampling in statistics, and has been extensive used in reinforcement learning \cite{sutton2018reinforcement} and Feynman-Kac type formulas for classical PDEs \cite{acebronNewParallelSolver2011a}.
\end{remark}
\begin{remark}
\label{nu_T}
    Notice how in Equation (\ref{stochastic-representation-clearer}) the functional $J_T(\Gamma_t)$ only depends on the states $S_1,S_2,\dots$ up to the state at time $T$,  $S_{1+\nu_T}$, and not explictly on the final time $T$ itself. This observation will become important later on when computing the time sensitivity.
\end{remark}

\subsection{Stochastic representation for the derivatives}
\label{sec:back}
For clarity, we present here the sensitivities obtained for the case of the stochastic solution in Equation (\ref{stochastic-representation-clearer}). Even though it is possible to compute all sensitivities for the general case in Equation (\ref{stochastic-representation}), the algebra is much more involved, and the results only differ slightly from the results obtained with Equation (\ref{stochastic-representation-clearer}), where some extra terms are added to the estimators coming from standard derivatives.  


For the four parameters (matrix $\mathbf{A}$, vectors $\mathbf{u}_0$ and $\bm{\alpha}$, and positive real number $T$) characterizing the F-ODE system, the sensitivity with respect to the vector $\mathbf{u}_0$ is the most straightforward to compute, since $\mathbf{u}_0\mapsto J_T(\omega;\mathbf{u}_0)$ is a differentiable function for a fixed path $\omega$, and then we can resort to the reparametrization trick. The closed form formula for this estimator, for each $j$-th component $(\mathbf{u}_0)_j$ of the vector, is given by 
\begin{align}
    \frac{\partial u_i(T)}{\partial (\mathbf{u}_0)_j}&=\mathbb{E}\left(\frac{\partial}{\partial (\mathbf{u}_0)_j} u_{\Gamma_T}(0)\prod_{j=1}^{\nu_T}\chi(S_j,S_{j+1})\mid S_1 = i\right)\nonumber\\
&=\mathbb{E}\left(\delta(S_{1+\nu_T},j\prod_{j=1}^{\nu_T}\chi(S_j,S_{j+1})\mid S_1 = i\right)
\end{align}
where $\delta(i,j)$ is the Kronecker delta. This sensitivity is involved in inferring the initial conditions from final observations at time $t=T$. Essentially, it corresponds to the row of the unknown matrix described in Section \ref{frac-lin-sys}.
\begin{equation}
 \nabla_{\mathbf{u}_0} u_i(T)=\left[\mathbf{E}(t)_{ij} \right]_{1\leq j \leq n}
\end{equation}
However, as it was pointed out in the previous section, the reparametrization trick does not apply to the remaining sensitivities and we need to make use of more sophisticated discrete event system differentiation techniques.
\subsubsection{Matrix and fractional exponent sensitivities}
\label{sec:mall}
The Malliavin weight technique \cite{warren2013malliavin}, also known as the score function method \cite{kleijnen1996optimization}, is a method to compute the sensitivity of the expected value of a functional $J_T$. In practice, this requires differentiating with respect to a given parameter $\theta$ the density probability of a path, provided such a density probability $p_\theta(\Gamma_t(\omega))$ depends on the parameter $\theta$.

This is the case of the sensitivities with respect to the off-diagonal elements of matrix $\mathbf{A}$, controlling the transition between states of the Markov chain, and both, the elements of the diagonal of $\mathbf{A}$ and $\bm{\alpha}$, governing the sojourn times. These sensitivities appear in image reconstruction~\cite{buonocore2018tomographic}, thermal diffusivity reconstruction~\cite{karashbayeva2023estimation}, and any graph learning method governed by a diffusive model \cite{thanou2017learning}. Notice however that the elements of $\mathbf{A}$ are also involved in the computation of $J_T(\Gamma_t)$, so in practice such a functional depends also on $\theta$. Hereafter, to simply notation we omit the subscript $T$ and highlight the $\theta$-dependency as $J_\theta$.

The Malliavin weight technique reads as follows
\begin{align}
\label{score-function}
    &\partial_\theta \int_\Omega d\omega\; p_\theta(\Gamma_t(\omega)) J_\theta (\Gamma_t(\omega))\nonumber\\&=\int_\Omega d\omega \;(\partial_\theta J_\theta (\Gamma_t(\omega))p_\theta(\Gamma_t(\omega))+ J_\theta (\Gamma_t(\omega))\partial_\theta  p_\theta(\Gamma_t(\omega)))\nonumber\\
&=\int_\Omega d\omega \;p_\theta(\Gamma_t(\omega))\left(\partial_\theta J_\theta (\Gamma_t(\omega))+ J_\theta (\Gamma_t(\omega))W_\theta(\Gamma_t(\omega))\right),
\end{align}
where the random variable 
\begin{equation}
    W_\theta(\Gamma_t)=\frac{\partial_\theta  p_\theta(\Gamma_t)}{p_\theta(\Gamma_t)}=\partial_\theta\log(p_\theta(\Gamma_t))
\end{equation}
is known as the Malliavin weight. This quantity is straightforward to compute for a fixed path $\omega$. In fact, from
Equation (\ref{p_gamma}) we readily have
\begin{align}
\label{score}
     \partial_\theta \log(p_\theta(\Gamma_t(\omega)))&=\sum_{j=i}^{\nu_T}\left[\partial_\theta \log(p(\Delta \tau_j;\xi(S_j)))+\partial_\theta \log(\mathbb{P}(\mathcal{J}_{S_j}=S_{j+1}))\right]\nonumber\\
    &+\partial_\theta\log(\mathbb{P}(\xi(S_{1+\nu_T})>T-\sum_{j=1}^{\nu_T}\Delta \tau_j))
\end{align}
To simplify the formulas, Equation (\ref{score-function}) can be rewritten as 
\begin{align}
&\int_\Omega d\omega \;p_\theta(\Gamma_t(\omega)\left(\partial_\theta J_\theta (\Gamma_t(\omega))+ J_\theta (\Gamma_t(\omega))W_\theta(\Gamma_t((\omega)))\right)\nonumber\\
&=\int_\Omega d\omega \;p_\theta(\Gamma_t)J_\theta (\Gamma_t)\left(\partial_\theta \log(J_\theta (\Gamma_t))+ \partial_\theta\log(p_\theta(\Gamma_t))\right),
\end{align}
which corresponds to the following expected value
\begin{align}
\mathbb{E}(J_\theta (\Gamma_t)(\partial_\theta \log(J_\theta (\Gamma_t))+ \partial_\theta\log(p_\theta(\Gamma_t)))\mid S_1=i).
\end{align}
Moreover, from Equation (\ref{stochastic-representation-clearer}) it holds that
\begin{align}
\label{extra-score}
    \partial_\theta \log(J_\theta (\Gamma_t))=\sum_{i=1}^{\nu_T}\partial_\theta\log(\chi(S_i,S_{i+1})).
\end{align}

We now provide closed-forms for all log-derivatives. Note that $\chi(i,k)$ only depends on the elements of the $i$-th row of matrix $\mathbf{A}$, therefore only the derivative with respect to those elements is non-zero.
\begin{equation}
\frac{\partial}{\partial a_{ij}}\log(\chi(i,k))=\begin{cases}
    &\frac{{\rm sgn}(a_{i j})}{\sum_{l\neq i}\vert a_{i l}\vert } \quad j\neq i\\
    &\frac{-1}{a_{i i}}\quad j=i
\end{cases}
\end{equation}
Concerning the log-derivative of the probability function $\mathbb{P}(\mathcal{J}_i=k)$ in Equation (\ref{score}), it turns out that only the derivative with respect to off-diagonal entries of the $i$-th row of matrix $\partial u_i(T)/\partial \mathbf{A}$ are non-zero when in state $i$. This is because the Markov chain has no self-loops, and consequently the probability of jumping from a state to itself is zero. Thus,  
\begin{equation}
    \frac{\partial}{\partial a_{ij}}\log(\mathbb{P}(\mathcal{J}_i=k))=\delta_{jk}\frac{{\rm sgn}(a_{ik})}{\vert a_{ik}\vert}-\frac{{\rm sgn}(a_{ij})}{\sum_{l\neq i}\vert a_{il}\vert},\quad i\neq j
\end{equation}
Summing both terms yields, 
\begin{equation}
\frac{\partial}{\partial a_{ij}}\log(\chi(i,k)) + \frac{\partial}{\partial a_{ij}}\log(\mathbb{P}(\mathcal{J}_i=k)) = 
\begin{cases}
    \frac{1}{a_{ik}} & \text{if } j = k \\
    \frac{-1}{a_{ii}} & \text{if } j = i \\
    0 & \text{otherwise,}
\end{cases}
\end{equation}
which is an efficient formula to be computed at each step.
The derivatives involved in the sojourn times are straightforward to compute and are described in the pseudocode at Subsection \ref{implementation}.
\begin{remark}
\label{remark-malliavin}
The usage of the density function (\ref{p_gamma}) requires some form of underlying Euclidean space. From the perspective of computer simulation, a path in $[0,T]$ requires an arbitrary-length chain of random numbers available. This corresponds to the underlying probability space being $\Omega=\otimes_{j=1}^\infty [0,1]$ which has a probability measure but not a proper density function.

We can check the validity of the formulas by applying the log-derivative trick to Equation (\ref{integral-representation-clearer}). By choosing $\theta=\alpha_j$, the solution to the following recursion is equivalent to the result obtained by the Malliavin weight technique.
\begin{align}
&\partial_{\alpha_j} u_i(T)=\mathbb{P}(\xi_i\geq T)\partial_{\alpha_j} \log(\mathbb{P}(\xi_i\geq T))u_i(0)+\nonumber\\&\sum_{k\neq i}\mathbb{P}(\mathcal{J}_i=k)\int_0^T d\tau \;p(\tau;\xi_i)\chi(i,k)\left(\partial_{\alpha_j} \log(p(\tau;\xi_i))u_k(T-\tau)+\partial_{\alpha_j}u_k(T-\tau)\right)
\end{align}
Any other choice of parameters will yield the same results. For a justification from a probabilistic point of view, we refer to Appendix \ref{appendix-B}.

\end{remark}
\subsubsection{Time sensitivities}
Time sensitivities are used when we do not know how long an event has been taking place, and we need to infer it from observations. For example, in \cite{thanou2017learning}, a model of multiscale dynamics of diffusion over a graph is used, which requires the time sensitivities. The main issue with this sensitivity is that a small perturbation $T+\Delta T$ does not change the density function, it enlarges the pathspace itself, so the previous Malliavin weight arguments do not apply. The sought sensitivity is equivalent, by definition, to the following limit taken from positive time increments $\Delta T>0$ due to $t\mapsto u_i(t)$ being differentiable.
\begin{equation}
    \partial_T u_i(T)=\lim_{\Delta T \shortarrow{7} 0}\frac{1}{\Delta T}\mathbb{E}(J_{T+\Delta T}(\Gamma_t)-J_{T}(\Gamma_t)\mid \Gamma_0 = i)
\end{equation}
If we fix a path $\omega$ and take the limit $\Delta T \shortarrow{7} 0$ inside the expected value with respect to $\omega$, we obtain a null estimator due to random walks being càdlàg piecewise constant.
\begin{equation}
\mathbb{E}\left(\lim_{\Delta T \shortarrow{7} 0}\frac{1}{\Delta T}(J_{T+\Delta T}(\Gamma_t(\omega))-J_{T}(\Gamma_t(\omega)))\right)  =\mathbb{E}(0)=0.
\end{equation}
However, this is clearly wrong, since the solution $\mathbf{u}(t)$ to the original F-ODE system~(\ref{main}) is not constant over time unless it is already in a steady state. This is another example of the common failure of the reparametrization trick when used for discrete event systems.
\begin{theorem} An unbiased estimator for the time sensitivity $\left.\frac{\partial u_i(t)}{\partial t}\right |_{t=T}\colon = \partial_T u_i(T)$ is given by
    \begin{align}
    \partial_T u_i(T)=\mathbb{E}\left(\frac{p(T-\sum_{i=1}^{\nu_T} \Delta \tau_i;\xi(S_{1+\nu_T}))}{\mathbb{P}(\xi(S_{1+\nu_T})>T-\sum_{i=1}^{\nu_T} \Delta \tau_i)}(J_{2+\nu_T}(\Gamma_t)-J_{1+\nu_T}(\Gamma_t))\right)
\end{align}
where we subscript the functional $J_T$ by an integer, as its value only depends on $\nu_T$ (Remark \ref{nu_T}). We then artificially extend the random walk with a new state $S_{2+\nu_T}$ and weight the difference in functionals by the so called critical rate.
\end{theorem}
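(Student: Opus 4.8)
The plan is to establish the estimator by conditional Monte Carlo (smooth perturbation analysis), exactly along the lines flagged by the preceding discussion: since taking $\Delta T\downarrow 0$ pathwise annihilates the increment (the walk is piecewise constant), I would instead push the limit inside an \emph{inner} conditional expectation that smooths the jump discontinuity, and only afterwards average over the remaining randomness. Concretely, I would start from
\[
\partial_T u_i(T)=\lim_{\Delta T\downarrow 0}\frac{1}{\Delta T}\,\mathbb{E}\bigl(J_{T+\Delta T}(\Gamma_t)-J_T(\Gamma_t)\mid\Gamma_0=i\bigr),
\]
which exists because $t\mapsto u_i(t)$ is differentiable, and note that, since $\Gamma_t$ is c\`adl\`ag and piecewise constant, the increment $J_{T+\Delta T}-J_T$ vanishes on the event that no jump falls in $(T,T+\Delta T]$.

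Next I would condition on the history $\mathcal{H}_T$ of the walk up to time $T$, i.e.\ on the completed steps $\{(S_j,\Delta\tau_j)\}_{j\le \nu_T}$ together with the current state $S_{1+\nu_T}$. Given $\mathcal{H}_T$ the walk sits in state $S_{1+\nu_T}$ with elapsed sojourn $\tau_e:=T-\sum_{j=1}^{\nu_T}\Delta\tau_j$, so the only information carried about the current holding time is the conditioning event $\{\xi(S_{1+\nu_T})>\tau_e\}$. The probability that exactly one jump occurs in $(T,T+\Delta T]$ is therefore the residual-life probability
\[
\mathbb{P}\bigl(\tau_e<\xi(S_{1+\nu_T})\le \tau_e+\Delta T\mid \xi(S_{1+\nu_T})>\tau_e\bigr)=\frac{p(\tau_e;\xi(S_{1+\nu_T}))}{\mathbb{P}(\xi(S_{1+\nu_T})>\tau_e)}\,\Delta T+o(\Delta T),
\]
whose prefactor is precisely the hazard (critical) rate, while the probability of two or more jumps in the interval is $O(\Delta T^2)$ and is discarded. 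On the one-jump event the functional updates from $J_{1+\nu_T}$ to $J_{2+\nu_T}$, where the fresh state $S_{2+\nu_T}$ is drawn from $\mathcal{J}_{S_{1+\nu_T}}$; this is the \emph{artificial extension} of the walk by one step referred to in the statement.

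I would then assemble the inner conditional expectation and pass to the limit,
\[
\frac{1}{\Delta T}\,\mathbb{E}\bigl(J_{T+\Delta T}-J_T\mid \mathcal{H}_T\bigr)\longrightarrow\frac{p(\tau_e;\xi(S_{1+\nu_T}))}{\mathbb{P}(\xi(S_{1+\nu_T})>\tau_e)}\,\mathbb{E}\bigl(J_{2+\nu_T}-J_{1+\nu_T}\mid \mathcal{H}_T\bigr)\quad(\Delta T\downarrow 0),
\]
and take the outer expectation via the tower property, which reproduces the claimed formula provided the limit can be exchanged with $\mathbb{E}(\cdot\mid\Gamma_0=i)$. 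As an independent consistency check I would verify that differentiating the integral representation (\ref{integral-representation-clearer}) in $T$ by the Leibniz rule yields the same recursion for $\partial_T u_i$, since the boundary term there carries exactly the factor $p(T;\xi_i)$ responsible for the critical rate.

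The main obstacle is precisely this exchange of the outer limit with the expectation. The hazard-rate expansion is routine; the delicate part is a dominated-convergence / uniform-integrability argument for the difference quotients: one must bound the $o(\Delta T)$ remainder uniformly over paths — so that, after division by $\Delta T$, both the multiple-jump contribution and the Taylor remainder of the survival function still vanish in the mean — and control the tails of $\nu_T$ together with the magnitude of the products $\prod\chi$, which ties back to the integrability and variance bounds established elsewhere in the paper. A secondary point to confirm is that absolute continuity of $\xi_*$ (hence existence and local regularity of the density $p$) makes the critical rate well defined and guarantees that the one-jump probability genuinely admits the first-order expansion used above.
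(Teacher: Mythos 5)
Your proposal is correct and follows essentially the same route as the paper's proof in Appendix \ref{appendix-A}: conditioning on the history of the walk up to time $T$, expanding the one-jump probability via the hazard (critical) rate while discarding $k\geq 2$ jump events as $\mathcal{O}(\Delta T^2)$ and the zero-jump event as contributing nothing, and then invoking the conditional Monte Carlo swap of derivative and expectation justified by the dominated-convergence framework of \cite{fu_conditional_1997}. The only difference is cosmetic: the paper performs the conditioning in two explicit stages (history, then jump count) and defers the uniform-integrability details to \cite{fu_conditional_1997}, whereas you fold these into a single conditioning step and flag the limit--expectation exchange as the step requiring care.
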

\begin{proof}
    Our proof can be found in Appendix \ref{appendix-A}. A more general proof can be seen in Chapter 3 in \cite{fu_conditional_1997} in the wider framework of generalized semi-Markov processes.
\end{proof}

\subsection{Variance bounds}
\label{variance-bounds}

Under a numerical point of view, no stochastic method is useful when the variance is unbounded. This is because it is standard practice to assume a bounded variance when constructing confidence intervals based on the classical central limit theorem (CLT).

We will establish first an upper bound for the variance of the stochastic solution in Equation (\ref{stochastic-representation}), for exponentially distributed sojourn times $\xi_*\sim p(\xi)=\lambda e^{-\lambda \xi}$ for $\lambda = \max_i \vert a_{ii}\vert$. Then, we will subsequently apply these results to the solution in  Equation (\ref{stochastic-representation-clearer}) by taking the exponential random variable as an upper bound to all Mittag-Leffler distributed sojourn times. To bound the variance in both cases, we bound the term $\mathbb{E}(J_T(\Gamma_t)^2)$ by conditioning on the number of jumps.
\begin{align}
    &\mathbb{V}(J_T(\Gamma_t))=\mathbb{E}(J_T(\Gamma_t)^2)-\mathbb{E}(J_T(\Gamma_t))^2\nonumber\\&\leq \mathbb{E}(J_T(\Gamma_t)^2)=\sum_{k=0}^\infty \mathbb{P}(\nu_T(\Gamma_t)=k) \mathbb{E}(J_T(\Gamma_t)^2\mid \nu_T(\Gamma_t)=k)
\end{align}
The distribution of the dual counting process at time $T$ for $\lambda$ rate exponentially distributed sojourn times is Poisson distributed with rate $\lambda T$.
\[
\mathbb{P}(\nu_T = k)=\frac{e^{-\lambda T}(\lambda T)^k}{k!}
\]
Then, by conditioning over $k$ jumps made at time $T$, we roughly bound the value $J_T(\Gamma_t)$. To do so, we first bound the multiplicative potential $\chi(i,k)$ by its maximum value over all pairs of states $1\leq i,k\leq n$, $M_\chi=\max_{i\neq k}\chi(i,k)$. We then pick a rough bound for all values of the importance sampling weight 
\[
M_p = \max_{1\leq i\leq n,t\in [0,T]}\frac{-a_{ii}t^{\alpha_i-1} E_{\alpha_{i},\alpha_{i}}(a_{i i}t^{\alpha_i})}{\lambda e^{-\lambda t}},\quad M_s = \max_{1\leq i\leq n,t\in [0,T]}\frac{E_{\alpha_{i}}(a_{i i}t^{\alpha_i})}{e^{-\lambda t}}
\]
which is finite due to $\bm{S}\times[0,T]$ being compact and both densities being absolutely continuous and bounded.
\begin{align}
\label{variance}
    &\mathbb{E}(J_T(\Gamma_t)^2)\leq \sum_{k=0}^\infty \frac{e^{-\lambda T}(\lambda T)^k}{k!} M_p^{2} (M_s M_\chi)^{2k}\max_{k}(\mathbf{u}_0)_k^2  \nonumber\\
    &=\max_{k}(\mathbf{u}_0)_k^2 \;M^2_p\; \sum_{k=0}^\infty \frac{e^{-\lambda T}((M_s M_\chi)^{2} \lambda T)^k}{k!}\nonumber\\
    &=\max_{k}(\mathbf{u}_0)_k^2 \;M^2_p\; e^{\lambda T ((M_s M_\chi)^2-1)}<\infty
\end{align}
For the particular case of the solution in Equation (\ref{stochastic-representation-clearer}), there are no importance sampling weights, that means $M_p=M_s=1$. As we pick $\lambda \geq \vert a_{ii}\vert$, and for all $\alpha\in(0,1)$, all Mittag-Leffler sojourn times are dominated by the exponential sojourn time with the same rate, we can also use the bound $$\mathbb{P}(\nu_T=k)\leq \frac{e^{-\lambda T}(\lambda T)^k}{k!}$$ and obtain a bound for the variance of Equation (\ref{stochastic-representation-clearer})
\begin{equation}
\mathbb{E}(J_T(\Gamma_t)^2)\leq \max_{k}(\mathbf{u}_0)_k^2 \; e^{\lambda T (M_\chi^2-1)}<\infty
\end{equation}
Note that when $|M_\chi|<1$, the variance does not increase with time. If $a_{ii}<0$ this occurs when the matrix is diagonally dominant, that is
$$
\sum_{j\neq i}\vert a_{ij}\vert<\vert a_{ii}\vert \quad \forall i.$$
The same arguments can be applied for bounding the sensitivities by choosing suitable bounds for the Malliavin weights and the critical rate in the compact interval $[0,T]$.

\subsection{Implementation}
\label{implementation}
The algorithm described below has the same structure of any generic embarrassingly parallel Monte Carlo method. In fact, we generate many independent paths using different cores/processors and then join together all the results through a suitable reduction, obtaining therefore an estimation of the sought quantities through the corresponding average. In the following it is described the pseudocode we used for generating a single random walk.
\begin{algorithm}[H]
\caption{Parallel implementation of algorithm}
\begin{algorithmic}[1]
\Statex Inputs: $\mathbf{A}\in\mathbb{R}^{n\times n}$, $\mathbf{u}_0,\bm{\alpha} \in\mathbb{R}^n$, $T\in\mathbb{R}_{\geq 0}$,  $i\in[1,\dots n]$, $N_s\in \mathbb{N}$, cumulative sum $\mathbf{P}\in\mathbb{R}^{n\times n}$ of embedded Markov chain.
\For{sim=1:$N_s$}
    \State $u_i(T) \gets 1$
    \State $\mathbf{W}_A,\mathbf{W}_\alpha \gets \mathbf{0}^{n\times n},\mathbf{0}^{n}$
    \State Generate $t\gets \tau \sim \xi(S_i)$ \Comment{Compute sojourn time}
    \While{$t < T$}
        \State $k \gets \min_j \text{ such that } \rho < \mathbf{P}_{ij}, \quad \rho\sim \mathcal{U}(0,1)$ \Comment{Next state}
        \State $u_i(T) \gets u_i(T) \times \text{sgn}(a_{ik}) \times \sum_{j\neq i}\vert a_{ij}\vert / (-a_{ii})$ \Comment{Update solution}
        \State Update $\mathbf{W}_A,\mathbf{W}_\alpha$
        \State Update state $i\gets k$ and time $t\gets t+\xi(S_i)$
    \EndWhile
    \State Update $\mathbf{W}_A,\mathbf{W}_\alpha$
    \State $\partial u_{i}(T)/\partial (\mathbf{u}_0)_i \gets u_i(T)$
    \State $u_i(T)\gets u_i(T) \times (\mathbf{u}_0)_i $
    \State $\partial u_{i}(T)/\partial \mathbf{A} \gets \mathbf{W}_A \times u_i(T)$
    \State $\partial u_{i}(T)/\partial \bm{\alpha} \gets \mathbf{W}_\alpha \times u_{i}(T)$
    \State Generate new state $k$ and compute $W_T$ with Equation \ref{eq:critical-rate}.
    \State $\partial u_{i}(T)/\partial T \gets  u_{i}(T) \times( (\text{sgn}(a_{i,k}) \times \sum_{j\neq i}\vert a_{ij}\vert / (-a_{i,i}))\times(\mathbf{u}_0)_k - (\mathbf{u}_0)_i)\times W_T  $ 

\EndFor
\end{algorithmic}
\end{algorithm}
To generate the random transition between states, we used the cumulative sum of matrix $\mathbf{P}$ with $(i,j)$-th entry given by $P_{ij}=\vert{a_{ij}}\vert/\sum_{j\neq i}\vert a_{ij}\vert$, searching for the first index $k$ such that $\sum_{j=1}^k P_{ij}>\rho$, where $\rho$ is a uniformly distributed random number in $[0,1]$. We describe in the following algorithm how the Malliavin weight is updated in practice. This has to be done together with the derivative of the functional over the path $J(\Gamma_t)$ itself.
\begin{algorithm}[H]
\caption{Update of weights $\mathbf{W}_A,\mathbf{W}_\alpha$}    
\begin{algorithmic}[1]
\Statex Inputs: $\mathbf{W}_A,\mathbf{W}_\alpha$, $\mathbf{A}$, $\bm{\alpha}$ current state $i$, next state $k$, last sojourn time $\tau$, total time $t$, final time $T$.
\If{$t < T$}
    \State $(\mathbf{W}_A)_{ik} \gets (\mathbf{W}_A)_{ik} + \frac{1}{a_{ik}}$
    \State $(\mathbf{W}_A)_{ii} \gets (\mathbf{W}_A)_{ii} - \frac{2}{a_{ii}} + \frac{\partial}{\partial a_{ii}} \log \left( E_{\alpha_i, \alpha_i}(a_{ii} \tau^{\alpha_i}) \right)$
    \State $(\mathbf{W}_{\bm \alpha})_i \gets (\mathbf{W}_{\bm \alpha})_i + \frac{\partial}{\partial \alpha_i} \log \left( E_{\alpha_i, \alpha_i}(a_{ii} \tau^{\alpha_i}) \right) + \log(\tau)$
\Else
\State $(\mathbf{W}_{\bm \alpha})_i \gets (\mathbf{W}_{\bm \alpha})_i + \frac{\partial}{\partial \alpha_i} \log \left( E_{\alpha_i}(a_{ii} (T-t+\tau)^{\alpha_i}) \right)$
\State $(\mathbf{W}_A)_{ii} \gets (\mathbf{W}_A)_{ii} + \frac{\partial}{\partial a_{ii}} \log \left( E_{\alpha_i}(a_{ii} (T-t+\tau)^{\alpha_i}) \right)$
\EndIf
\end{algorithmic}
\end{algorithm}

\subsection{Time complexity of the algorithm}

In the following, we discuss the computational cost of the algorithm, focusing merely on the execution time for generating a single random walk. For computing the stochastic solution, the most time-consuming parts of the algorithm are twofold: the time needed to generate the sojourn times per simulation, which grows linearly with the expected number of jumps, and the time spent to perform a linear search in each row to choose the node to jump to, which depends on the size of the matrix $\mathbf{A}$.

\begin{figure}[b]
    \centering
    \includegraphics[scale=0.45]{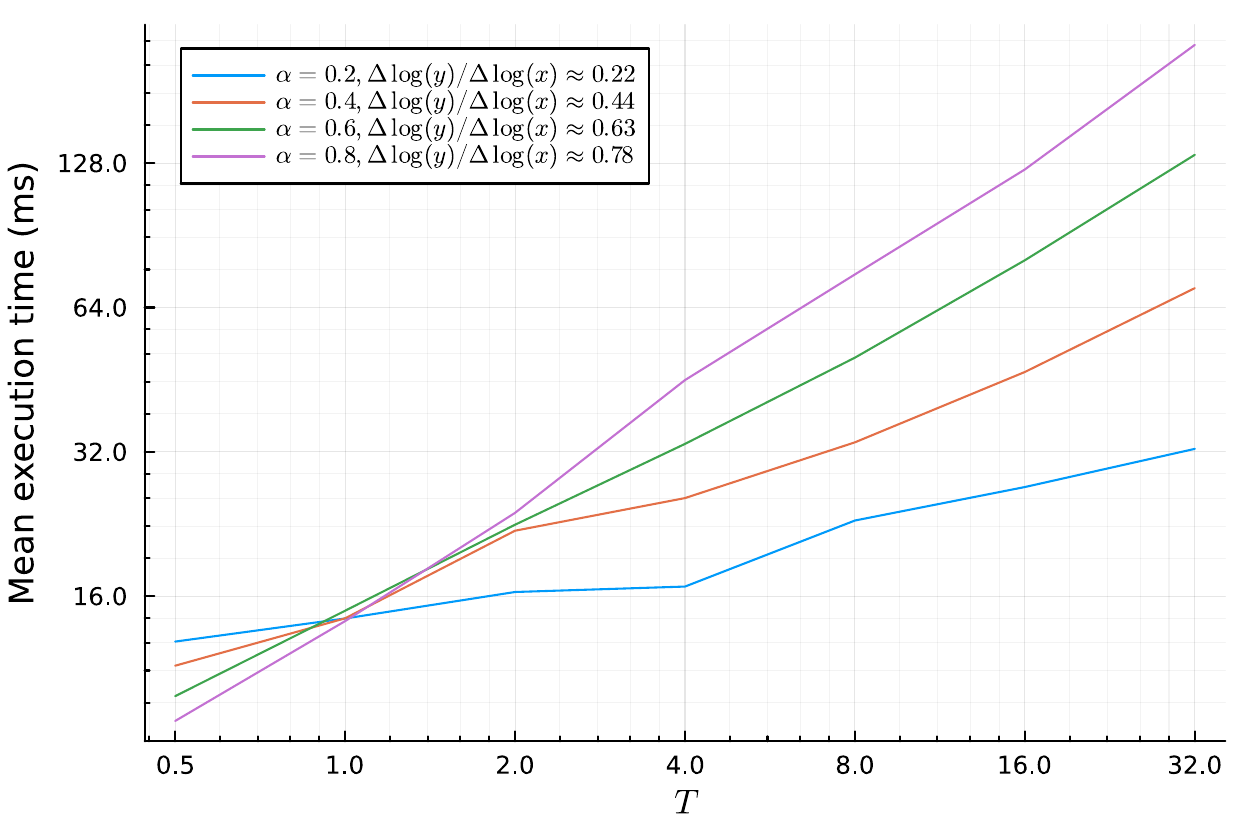}
    \caption{Power law behaviour of average execution time of a random walk with respect to final time $T$ for different values of the $\alpha$ exponent, being $\alpha_i=\alpha, \forall i$.}
    \label{fig:alphascale}
\end{figure}

Including sensitivity analysis increases the number of computations per random walk, with the evaluation of the Mittag-Leffler function being identified as the main time-consuming part. 
Note that evaluating this function is mandatory in order to update conveniently both the Malliavin weights for $\bm{\alpha}$ and $a_{ii}$, and for the CMC estimator for $T$.
We make use of a Laplace transform method \cite{garrappa2018computing} to compute $E_\alpha(z)$ and $E_{\alpha,\alpha}(z)$, since we have experimentally observed to be the fastest among all existing methods. Furthermore, we have observed that the expected execution time for the evaluation of the Mittag-Leffler function at Mittag-Leffler distributed random times (given by the following density function)
\begin{equation}
\label{pdf_f}
f(\tau)=\lambda \tau^{\alpha-1}E_{\alpha,\alpha}(-\lambda \tau^\alpha)
\end{equation}
remains constant through all possible values of $\alpha$ and $\lambda$. This implies, in practice, that the execution time for a particular random walk only depends on the number of calls to the Mittag-Leffler function and not on the particular spatial path followed by the random walk. The number calls is proportional to the number of jumps given by the random walk in $[0,T]$. This is the random variable $\nu_T$ given by Equation (\ref{dual-jump-process}), and, for fixed $\lambda$ and $\alpha$, is known to follow a fractional Poisson distribution \cite{laskin2003fractional} with expected value $\lambda T^\alpha$. 

In fact, numerical experiments confirm this finding, and this is shown in Fig.~\ref{fig:alphascale} in log-log scale, being the slopes of the curves (shown in the legend of the figure) in agreement with the theoretical estimates. For this experiment we consider a fixed 1D-Laplacian matrix and we average the time taken to simulate a random walk and its sensitivities in $[0,T]$ for different values of $T$ and a constant fractional exponent $\bm{\alpha}=\alpha\mathbf{1}$ for different values of $\alpha$. This is done through the Julia library \texttt{BenchmarkTools.jl} which adaptively chooses the number of simulations $N_s$ to take the average. Note that when $\alpha$ goes  to zero, we observe the typical subdiffusive behaviour characterized by few spatial jumps. 

\begin{figure}[b]
    \centering
    \begin{subfigure}[b]{0.45\textwidth}
        \centering
        \includegraphics[width=\textwidth]{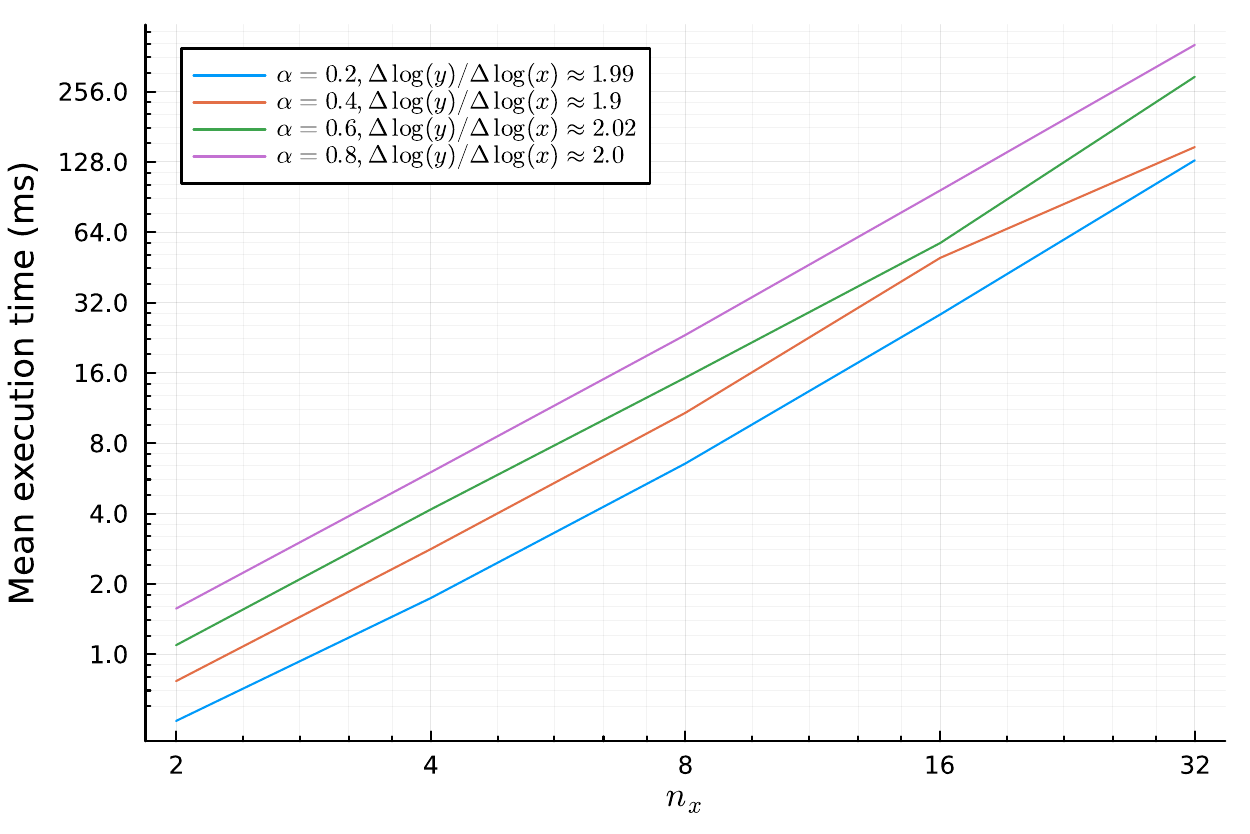}
        \caption{Mean execution time as function of the discretization points $n_x$ for a 1D-Laplacian matrix $L$, keeping fixed the time to $T=8$.}
        \label{fig:scalenx}
    \end{subfigure}
    \hspace{0.05\textwidth}
    \begin{subfigure}[b]{0.45\textwidth}
        \centering
        \includegraphics[width=\textwidth]{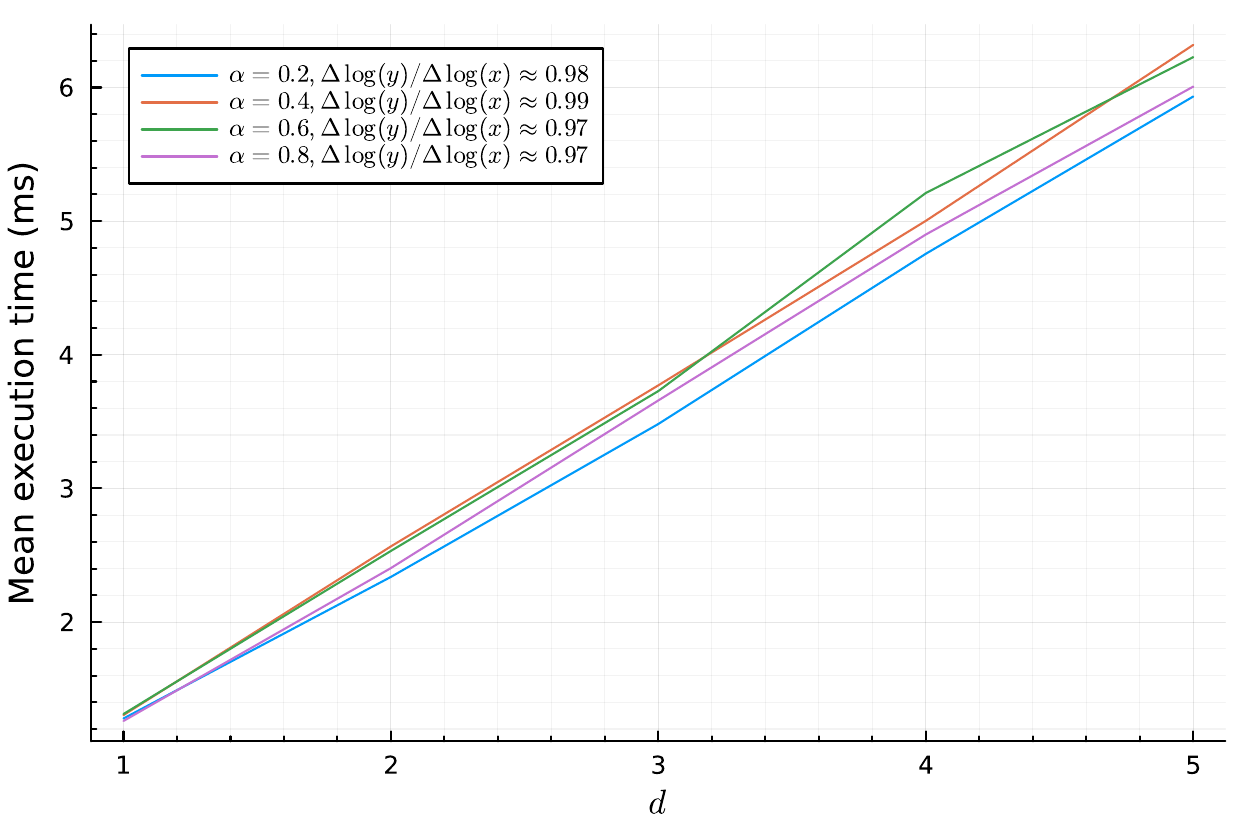}
        \caption{Mean execution time as function of the dimension $d$ of a $d-$dimensional Laplacian matrix $L$. Rest of parameters are kept fixed to $T=1$, and $n_x=4$.}
        \label{fig:scaled}
    \end{subfigure}
\end{figure}

It is worth observing in Equation (\ref{pdf_f}) the difference in the dependency of the fractional exponent $\alpha$ on both variables, the time $t$ and rate $\lambda$. As a consequence, a different contribution to the computational time of the algorithm is expected. Concerning the rate $\lambda$, recall from Equation (\ref{integral-representation}) that this corresponds to the diagonal elements of the matrix $\mathbf{A}$, $a_{ii}$. Hence we argue that the computational time should grow linearly with $a_{ii}$, since the expected value of the density probability in Equation~(\ref{pdf_f}) is proportional to $\lambda$.
To illustrate this behavior we consider here a simple test case. This consists in a matrix $\mathbf{A}$ corresponding to the discretization of the 1D Laplace operator using the standard 3-point stencil finite difference approximation, after discretizing in space with $n_x$ grid points. For this case $a_{ii}$ is proportional to $n_x^2$, and therefore the execution time should be proportional to $n_x^2$ accordingly. This is indeed what happens, as it can be seen in Fig.~\ref{fig:scalenx}.

In the case of problems where the matrices correspond to the discretization of the Laplacian operator, it is worth analyzing the time complexity of the algorithm with respect to the dimensionality of space. To this purpose we consider now the discretization of the $d-$dimensional Laplace operator using a $2d+1$-point stencil finite difference approximation. In Fig. \ref{fig:scaled} it is shown that the mean execution time grows linearly with the dimension, and independently of the value of $\alpha$.  This is in contrast to the typical exponential growth found in deterministic methods.

Note that the results shown in Figs.~\ref{fig:alphascale}, \ref{fig:scalenx} and~\ref{fig:scaled} correspond to F-ODE systems where the fractional exponent vector $\alpha_i = \alpha,\forall i$, and this is basically due to the fact that  the distribution of the sojourn times are the same. For a more general case the distribution of the sojourn times becomes different for each node, and thus we can only provide worst-case scenario complexity bounds for such a F-ODE system with parameters $\bm\theta=(\mathbf{A},\mathbf{u}_0,\bm{\alpha},T)$. This can be done by assuming all jumps follow the dominating fastest sojourn times among all nodes, which is given by
\begin{equation}
    \label{scaling-time}
     \max_{i,j} \mathcal{O}(a_{ii}T^{\alpha_j})
\end{equation}
This is a remarkable finding because it shows that the time complexity of the algorithm does not depend explicitly on the size of the matrix, but only on the underlying stochastic dynamics. 
Moreover, we believe Equation (\ref{scaling-time}) provides an insightful connection on how typical stability issues for deterministic methods, such as stiffness of the matrix, translate here in larger execution times.

Finally, to conclude, it is worth pointing out that the number of simulations we can run per unit of time increases linearly with the number of cores used \cite{acebron2019monte, guidotti2024fast}. Therefore, knowing the expected execution time for one random walk, we can infer the expected execution time for $r$ i.i.d. random walks distributed through $p$ cores. This is because as it was previously mentioned, the proposed algorithm is based on a embarrassingly parallel Monte-Carlo method. Due to the central limit theorem, it turns out that the size of the confidence intervals for the Monte-Carlo solution is proportional to $\sqrt{N_s}$, being $N_s$ is the number of random walks. Therefore, the size of the confidence interval can be increased by a factor of $\sqrt{p}$ within the same computational time
,  simply resorting to simulate in parallel the algorithm using $p$ cores.

\section{Numerical results}\label{numeric-results}
\subsection{Numerical validation}

To validate the theoretical results, we randomly generate $100$ F-ODE systems. We compute their solution and sensitivities both, deterministically and stochastically, to test whether the expected values of the stochastic method coincides with the results obtained by the deterministic method. This is done via null-hypothesis testing. The algorithms where coded in Julia v.1.10 and executed in a system powered by an AMD EPYC 9554P 64-Core Processor with 128 threads. The deterministic solution was computed using the L1 scheme \cite{yan2018analysis}. This scheme discretizes the timespan $[0,T]$ by $t_0=0<t_1<\dots<t_{N_t-1}<t_{N_t}=T$ and formally interpolates the unknown solution $\mathbf{u}(t)$ of Equation (\ref{caputo-derivative}) in the timegrid by a linear spline to yield an iterative solution for $\mathbf{u}(t)$ in which $\mathbf{u}(t_i)=f(\mathbf{u}(t_0),\dots,\mathbf{u}(t_{i-1}))$ for some function $f$. 

The sensitivities of the system were computed deterministically resorting to a 2-point stencil centered finite difference scheme \cite{frames_white_2023_10059774}. A suited Julia library is used to evaluate the function in the stencil $[-h,h]$, with $h=\mathcal{O}(\sqrt{\varepsilon})$ where $\varepsilon\approx 2.2\times 10^{-16}$ is 64-bit machine epsilon, which minimizes the sum of the discretization bias and the round-off error caused by choosing an $h$ too close to $\varepsilon$. This requires precisely $2n^2+4n+3$ calls to the L1 solver per test. Each call to the solver is computationally costly, since we should choose a value of $\Delta t$ small enough to ensure stability and minimize the discretization error as much as possible. Due to the large amount of computational time the deterministic solver requires, we constrain our numerical experiments to analyze random F-ODE systems of size $n=5$.

The matrices $\mathbf{A}^{(k)}$ for each test are generated by drawing independent realizations of the following random matrix.
\begin{equation}
\label{random-matrix}
    a^{(k)}_{ij} = \begin{cases}
        \mathcal{N}(0,1),\quad &i\neq j\\
        -(1+\mathcal{U}(0,1))\sum_{l\neq i}\vert a_{il}\vert\quad &i=j
    \end{cases}
\end{equation}
This choice ensures the matrices are diagonally dominant with negative diagonal while remaining as general as possible. Each individual component of the initial condition $\mathbf{u}_0$, as well as the final time $T$, are independently drawn a $\mathcal{U}(0,1)$ distributions. The fractional exponents $\bm{\alpha}$ are independently drawn from $\mathcal{U}(0.6,1)$ to ensure stability of the L1 scheme for a fixed $\Delta t$. 

To validate our solution we use T-tests in the scalar case and the Hotelling $T^2$-test using \texttt{HypothesisTests.jl}\footnote{https://github.com/JuliaStats/HypothesisTests.jl}, flattening the matrix $\frac{\partial u_i(T)}{\partial a_{ij}}$ to a vector. Our null-hypothesis is that the expected value of our Monte-Carlo method is the deterministic solution, as in $H_0\colon \mathbb{E}(J(\Gamma))=u_i(T)$ and so on, with $\frac{\partial u_i(T)}{\partial a_{ij}}$, $\frac{\partial u_i(T)}{\partial \alpha_j}$, $\frac{\partial u_i(T)}{\partial u_j(0)}$ and $\frac{\partial u_i(T)}{\partial T}$. The main idea of the tests is to construct $95\%$ confidence intervals for the Monte-Carlo solution and check whether the deterministic solution is within those confidence intervals. When this happens, we conclude that the tests were passed. If approximately $95\%$ of the tests are passed (it should be $95\%$ asymptotically when the number of tests goes to infinity), then there is not enough evidence to reject the null-hypothesis.

In Table \ref{table_results}, we summarized our findings, which confirm experimentally that the method indeed works.

\begin{table}[h]
\centering
\renewcommand{\arraystretch}{1.5} 
\setlength{\tabcolsep}{15pt} 
\begin{tabular}{|c|c|c|c|c|c|}
\hline
$n=5$ & $u_i(T)$ & $\frac{\partial u_i(T)}{\partial a_{ij}}$ & $\frac{\partial u_i(T)}{\partial \alpha_i}$ & $\frac{\partial u_i(T)}{\partial u_i(0)}$ & $\frac{\partial u_i(T)}{\partial T}$ \\
\hline
Tests passed over 100 & 94 & 94 & 93 & 94 & 95 \\
\hline
\end{tabular}
\caption{Number of experiments passing the null-hypothesis testing over 100 tests with $N_s=10^5$ random walks. Sensitivities were evaluated at the random parameters generated by Equation (\ref{random-matrix}).}
\label{table_results}
\end{table}
\begin{remark}
    The usage of null-hypothesis testing provides a non-rigorous evidence of a stochastic method working. The null-hypothesis tests only work asymptotically with $N_s \rightarrow \infty$ and, in our case, are actually flawed, as we only have access to a numerical approximation of the true solution $\mathbf{u}(t)$ and its sensitivities. We circumvent these issues by choosing $\Delta t$ small enough such that the numerical error is orders of magnitude smaller than the sample variance and a suited $N_s$. All sensitivity computations are correlated, so if a test fails for $u_i(T)$, it might correlate with any sensitivity test also failing. A more effective way of doing the tests would be to join all tests into a single $T^2$-test by flattening all sensitivities and solution onto a vector, but we opted to split up the tests merely for exposition purposes.
\end{remark}

\subsection{Test case: Robin boundary conditions}
To test the algorithms in a more real, although simplified, setting,  we consider the fractional in time heat equation in $[0,1]$ with mixed boundary conditions. The initial condition is a Gaussian pulse centered at $\mu=0.1$ and standard deviation $\sigma=0.025$, and for the variable fractional exponent we choose $g(x;\alpha)=\alpha(\sin(\pi x)+1)/4+0.5$. This depends on a single arbitrary parameter $\alpha$ which in the following we set equal to $\alpha_0=0.7$.
\begin{align*}
    &^C_0 D_t^{g (x;\alpha)}u=\kappa(x)\frac{\partial^2 u}{\partial x^2}, \qquad (x,t)\in(0,1)\times (0,T)\nonumber\\
    &u(x,0)=\exp(-(x-\mu)^2/(2\sigma^2))/C\nonumber\\
    &b_1 u(0,t)+b_2 \partial_x u(0,t) = 0,\quad \partial_x u(1,t)=0
\end{align*}
We discretize the space onto $\mathbf{x}\in\mathbb{R}^{n_x-1}$ such that $x_{i+1}=i\Delta x$ for $ i=1,\dots,n_x-1$ where the grid space is given by $\Delta x=1/n_x$ to convert the F-PDE into a linear F-ODE system
\begin{align}
&^C_0 D_t^{g(\mathbf{x};\alpha)}\mathbf{u}=A\mathbf{u}\nonumber\\
&\mathbf{u}(0)=u(\mathbf{x},0)
\end{align}
where the matrix is given by $\mathbf{A}=-\mathbf{L}$, with $\mathbf{L}$ the second order centered finite difference approximation of the Laplacian
\begin{equation}
    \mathbf{L} = \frac{\mathrm{diag}(\kappa(\mathbf{x}))}{(\Delta x)^2}
    \begin{pmatrix}
    \left(\frac{b_2}{b_1 \Delta x - b_2} + 2\right) & -1 & 0 & 0 & \cdots & 0 & 0 \\
    -1 & 2 & -1 & 0 & \cdots & 0 & 0\\
    0 & -1 & 2 & -1 & \cdots & 0 & 0\\
    0 & 0 & -1 & 2 & \cdots & 0 & 0\\
    \vdots & \vdots & \vdots & \vdots & \ddots & \vdots & \vdots \\
    0 & 0 & 0 & 0 & \cdots & -1 & 1
\end{pmatrix}
\end{equation}
In a typical inverse problem set-up we would observe the solution at $u(\Delta x,T)$ and then calibrate the parameters. If the unknowns are, for example, the $\alpha$ in the vector $g(\mathbf{x};\alpha)$ and both parameters $b_1$,and $b_2$ at the Robin boundary condition, we would need to compute the following sensitivities.
\begin{equation}
    \frac{\partial u_1(T)}{\partial b_i}=\frac{\partial u_1(T)}{\partial a_{11}}\frac{\partial a_{11}}{\partial b_i}\qquad\frac{\partial u_1(T)}{\partial \alpha}=\sum_{i=1}^{n_x-1} \frac{\partial u_1(T)}{\partial g_i}\frac{\partial g_i}{\partial \alpha}.
\end{equation}
Here $g_i$ denotes $g(x_i;\alpha)$.
To show how our method works in a practical problem, we compute the quadratic loss of the solution both, with the L1 scheme $u(T;\alpha_0)$ and our stochastic method (dropping the subscript $u_1(T;\theta):=u(T;\theta)$)
\begin{equation}
    \mathcal{L}(\alpha)=\frac{1}{2}(u(T;\alpha)-u(T;\alpha_0))^2,\quad \mathcal{L}(a_{11})=\frac{1}{2}(u(T;a_{11})-u(T;(a_{11})_0))^2
\end{equation}
together with their sensitivities
\begin{align}
    \frac{\partial \mathcal{L}(\alpha)}{\partial \alpha} &=\frac{\partial u(T;\alpha)}{\partial \alpha}(u(T;\alpha)-u(T;\alpha_0))\nonumber\\ \frac{\partial \mathcal{L}(a_{11})}{\partial a_{11}}&=\frac{\partial u(T;a_{11})}{\partial a_{11}}(u(T;a_{11})-u(T;(a_{11})_0))
\end{align}
for $\alpha\in\{0.4+0.1i,\;i=0,\dots,5\}$ and $a_{11}\in\{-380.95+5i,\;i=-3,-2,\dots,1,2\}$ with true parameters $\alpha_0=0.7$ and $(a_{11})_0\approx -380.95$.

\begin{figure}[t]
    \centering
    \includegraphics[scale=0.35]{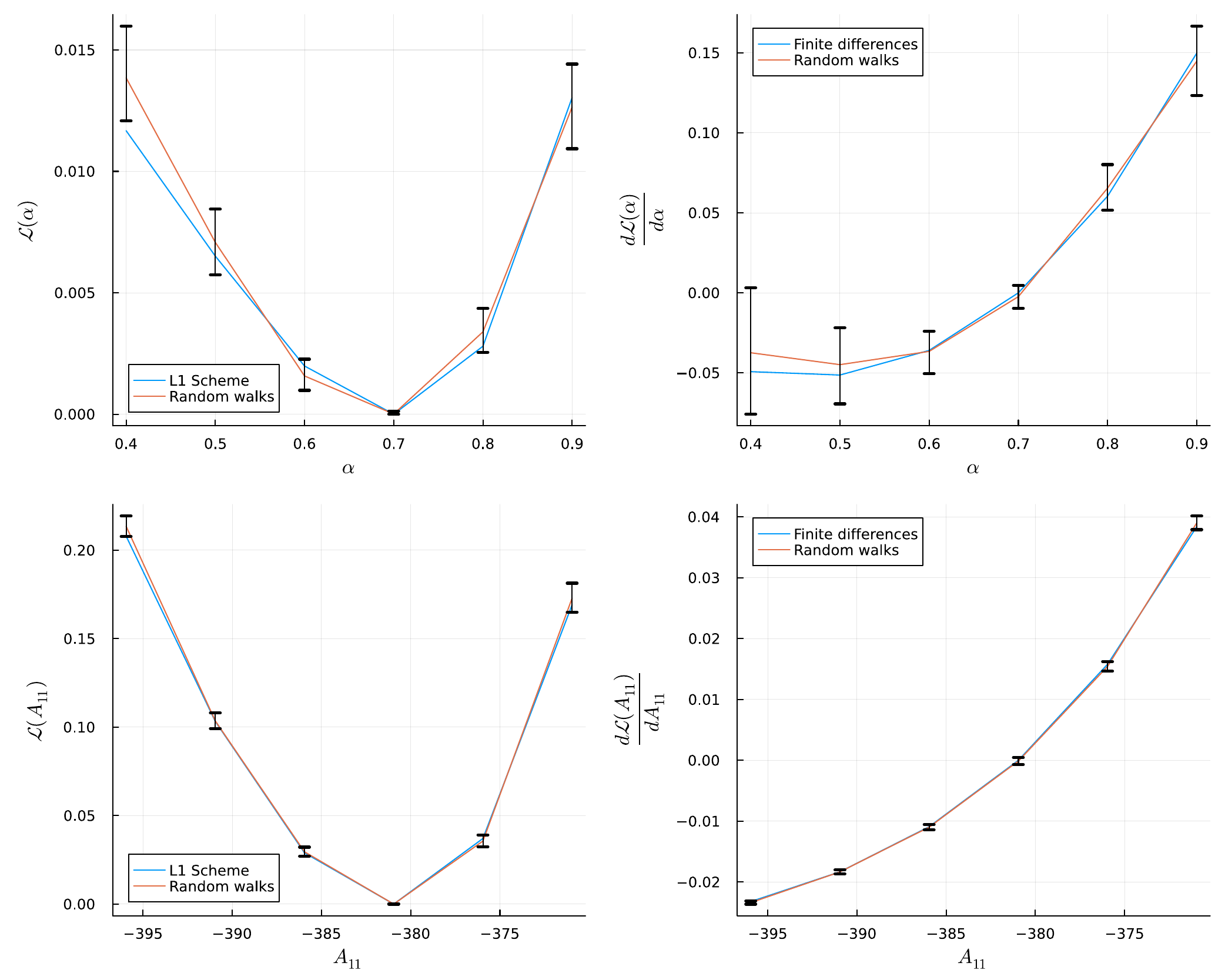}
    \caption{Deterministic and stochastic results for the loss and its sensitivities with respect to the $\alpha$ value and the first entry of the matrix $\mathbf{A}$.}
    \label{fig:all_graphs}
\end{figure}
As both, the stochastic estimator of the loss and the estimator of its sensitivity, follow generalized $\chi^2$-distributions \cite{ittrich2000probabilities}, we opted to construct $p=0.05$ confidence intervals by bootstrapping the random walks. To do so, we sample $N_s=10^6$ random walks and approximate the probability measure of the pathspace by the empirical distribution. Then, we sample with replacement---bootstrap---$N_s$ random walks from said distribution and compute the loss and sensitivity given by this bootstrapped samples. We do this $5000$ times to have a reasonable approximation of the distribution of the stochastic solution, and construct confidence intervals using the $p$ and $1-p$ quantiles of those $5000$ samples as seen in Fig. \ref{fig:all_graphs}.

The deterministic solution and the stochastic confidence intervals overlap in all except one result $\mathcal{L}(\alpha=0.4)$. As we justified in the previous section: one failure in twenty-four experiments falls within the acceptance of the null-hypothesis at $p=0.05$.

\section{Conclusion}
\label{conclusions}
In this work, we have derived a stochastic solution based on random walks for a large class of linear F-ODE systems. A notable feature of this solution is that it can be used to compute the solution at single nodes. A further advantage of this approach lies in the capability of being able to compute all unbiased sensitivities of the solution through discrete event system differentiation,  while avoiding time discretization and intermediate storage of the solution.

The expected execution time of the algorithm and its variance are theoretically estimated, and numerical results are provided to hold our claims. In view of the results,  we consider that this algorithm stands as a viable alternative to the deterministic methods for solving large-scale linear F-ODE systems. However, to apply this method successfully it may be required to use some variance reduction tools. In particular, conditional Monte-Carlo is well-suited for the semi-markovian nature of our problem~\cite{anderson2022conditional}. 


One advantage of our approach compared to stochastic calculus methods for F-PDEs is that users do not need to deal with first exit time issues after discretization. Additionally, our method works with arbitrary matrices, whereas traditional stochastic calculus methods are often restricted to a more limited class of linear operators in space. Furthermore, we do not discretize in time, meaning our only source of bias is the initial spatial discretization.

It is left for future research to generalize the algorithm to account for a forcing term and continuum observations. In the continuous case, our algorithm still holds by providing a piecewise constant estimator for the solution in $[0,T]$, however, some of our sensitivity estimators are not piecewise constant. In the particular application to inverse problems, an efficient implementation may require taking into account the specific optimization algorithm used to solve the problem. Concerning forcing terms, this can be indeed added to the recursive integral representation of the solution and provided readily a stochastic interpretation. However, to simplify our exposition we decided not to include here and left as well for a future work.

To conclude, it is worth pointing out here that the proposed method may arise as a competitive method for solving linear F-ODE based optimization problems in a massive parameter space. Although theoretically this seems to require taking the limit $N_s \rightarrow \infty$ to obtain the exact solution and gradient, approximations of both quantities are often sufficient. For instance, stochastic gradient descent has achieved enormous success in deep learning over the last decade, sometimes even proving advantageous over standard gradient descent \cite{hardt2016train}.


\bmhead{Competing interests}

All authors certify that they have no affiliations with or involvement in any organization or entity with any financial interest or non-financial interest in the subject matter or materials discussed in this manuscript.

\bmhead{Data availability}

The authors declare that the data sets generated during the current study are available from the corresponding author on reasonable request.

\begin{appendices}
\section{Proofs}
\subsection{Derivation of the time sensitivity formula}
\label{appendix-A}
For any two random variables $X,Y$ defined in the same probability space $(\Omega,\mathcal{F},\mathbb{P})$ we have \cite{heidergott2008measure}
\begin{equation}
    \mathbb{E}(X)=\mathbb{E}(\mathbb{E}(X\vert Y))
\end{equation}
Our idea will be to condition with some random variable $Y$ so we can use dominated convergence arguments (Chapters 1 and 3 in \cite{fu_conditional_1997}) to swap the derivative and the expected value
\begin{equation}
    \partial_T \mathbb{E}(X)=\mathbb{E}(\partial_T \mathbb{E}(X\vert Y))
\end{equation}
while having some closed form for $\partial_T \mathbb{E}(X\vert Y)$. We firstly condition over the $\sigma$-algebra generated by $\Gamma_s$ for $0< s \leq T$, which represents the information we obtain from the hypothetical whole path $\Gamma_t\colon [0,\infty)\rightarrow \bm{S}$ when we simulating it up to time $T$.
\begin{align}
    &\mathbb{E}(J_{T+\Delta T}(\Gamma_t)-J_{T}(\Gamma_t)\mid \Gamma_0 = i)\nonumber\\
    &=\mathbb{E}(\mathbb{E}(J_{T+\Delta T}(\Gamma_t)-J_{T}(\Gamma_t)\mid \sigma(\Gamma_s\colon 0< s \leq T))\mid \Gamma_0 = i)
\end{align}
We need to condition one last time over the number of jumps made between $T$ and $T+\Delta T$ conditioned over the whole story of the path $\Gamma_t\colon 0< s \leq T$
\begin{align}
&\mathbb{E}(J_{T+\Delta T}(\Gamma_t)-J_{T}(\Gamma_t)\mid \sigma(\Gamma_s\colon 0< s \leq T))\nonumber\\
&=\mathbb{E}(\mathbb{E}(J_{T+\Delta T}(\Gamma_t)-J_{T}(\Gamma_t)\mid \nu_{T+\Delta T}-\nu_T) \mid \sigma(\Gamma_s\colon 0< s \leq T))
\end{align}
The main idea in \cite{fu_conditional_1997} is to perform the following swapping
\begin{equation}
    \partial_T \mathbb{E}(X\mid \Gamma_0=i)=\mathbb{E}(\mathbb{E}(\partial_T \mathbb{E}(X\mid \nu_{T+\Delta T}-\nu_T)\mid \sigma(\Gamma_s\colon 0< s \leq T))\mid\Gamma_0=i)
\end{equation}
and compute both outer expectations via Monte-Carlo while using a closed form for the derivative of the innermost expectation conditioned to the Monte-Carlo values. As $\sigma(\Gamma_0,\Gamma_s\colon 0< s \leq T)=\sigma(\Gamma_s\colon 0\leq s \leq T)$, we use a superscript $^{\Gamma_T}$ to indicate conditioning over the $\sigma$-algebra generated up to time $T$.

\begin{align}
    &\lim_{\Delta T \shortarrow{7}  0}\frac{1}{\Delta T}\mathbb{E}^{\Gamma_T}(J_{T+\Delta T}(\Gamma_t)-J_{T}(\Gamma_t)\mid \nu_{T+\Delta T}-\nu_T)=\nonumber\\
    &\sum_{k=0}^\infty \lim_{\Delta T \shortarrow{7}  0}\frac{\mathbb{P}^{\Gamma_T}(\nu_{T+\Delta T}-\nu_T=k)}{\Delta T}\mathbb{E}^{\Gamma_T}(J_{T+\Delta T}(\Gamma_t)-J_{T}(\Gamma_t)\mid \nu_{T+\Delta T}-\nu_T=k)
\end{align}
Using the arguments in~\cite{fu_conditional_1997}, where it is shown that the limit of jumping $k$ times in a $\Delta T$ stretch of time is of order
\begin{equation}
    \mathbb{P}(\nu_{T+\Delta T}-\nu_T=k\mid \sigma(\Gamma_s\colon 0\leq s \leq T))=\mathcal{O}(\Delta T^k)
\end{equation}
we drop all events $k\geq 2$. We also drop the event of no jump $k=0$ as the functional does not change if there are no extra jumps, that is $J_{T+\Delta T}(\Gamma_t)=J_{T}(\Gamma_t)$ if $\nu_{T+\Delta T}=\nu_T$. For the event $k=1$, that is, just one jump in the $\Delta T$ timespan, we want to compute the following limit\footnote{This is referred to as stochastic weight in \cite{arya2022automatic} and critical rate in \cite{fu_conditional_1997}.}
\begin{equation}
\label{critical-rate-primordial}
    \lim_{\Delta T \shortarrow{7} 0}\frac{1}{\Delta T}\mathbb{P}(\nu_{T+\Delta T}-\nu_T=1\mid \nu_T,\{S_i,\Delta \tau_i \}_{i=1}^{\nu_T}, S_{1+\nu_T})
\end{equation}
where we explicitly write the random variables that generate the $\sigma$-algebra for which we have an actual value in each simulation.
Notice how we do not condition over the exact value of the last jump $\Delta \tau_{1+\nu_T}$ as the only information we get from observing the path up to time $T$ is that $\tau_{1+\nu_T}=T-\sum_{i=1}^{\nu_T}\Delta \tau_i$. To finish up, we easily find a closed-form for Equation (\ref{critical-rate-primordial})
\begin{align}
\label{eq:critical-rate}
&\lim_{\Delta T \shortarrow{7} 0}\frac{1}{\Delta T}\frac{\mathbb{P}(T+\Delta T-\sum_{i=1}^{\nu_T}\Delta \tau_i >\xi(S_{1+\nu_T})>T-\sum_{i=1}^{\nu_T}\Delta \tau_i)}{\mathbb{P}(\xi(S_{1+\nu_T})>\sum_{i=1}^{\nu_T} \Delta \tau_i)}\nonumber\\
&=\frac{\mathbb{P}(\xi(S_{1+\nu_T})=T-\sum_{i=1}^{\nu_T} \Delta \tau_i)}{\mathbb{P}(\xi(S_{1+\nu_T})>T-\sum_{i=1}^{\nu_T}\Delta \tau_i}
\end{align}
In the numerator we have the probability of the path making an extra jump if and only if $\Delta T$ is added as extra time, given that the path had only jumped $\nu_T$ times and we were in state $S_{1+\nu_T}$ at time $T$, so we know the sojourn time $\xi_{1+\nu_T}$. The limit of the numerator is, of course, the density of the sojourn time. Lastly
\begin{align*}
     &\mathbb{E}(J_{T+\Delta T}(\Gamma_t)-J_{T}(\Gamma_t)\mid \nu_{T+\Delta T}-\nu_T=1,\nu_T,\{S_i,\Delta \tau_i \}_{i=1}^{\nu_T}, S_{1+\nu_T})\nonumber\\&=J_T(\Gamma)\left(\frac{\mathrm{sgn}(a_{S_{1+\nu(T)},S_{2+\nu(T)}})\sum_{j\neq S_{1+\nu(T)}}\mathrm{sgn}(a_{S_{1+\nu(T)}j})a_{S_{1+\nu(T)}j}}{-a_{S_{1+\nu(T)},S_{1+\nu(T)}}}-1\right)\nonumber\\
    &:=J_{1+\nu(T)}(\Gamma)-J_{\nu(T)}(\Gamma)
\end{align*}
notice how we can abuse the notation from $J_T$ to $J_{\nu(T)}$, or in general, to $J_N, N\in \mathbb{Z}_{\geq 0}$ as the functional $J$ only depends on the number of jumps made, not the sojourn times themselves. Therefore our unbiased CMC estimator is given by
\begin{align}
    \partial_T u_i(T)=\mathbb{E}\left(\frac{\mathbb{P}(\xi(S_{1+\nu_T})=T-\sum_{i=1}^{\nu_T} \Delta \tau_i)}{\mathbb{P}(\xi(S_{1+\nu_T})>T-\sum_{i=1}^{\nu_T} \Delta \tau_i)}(J_{1+\nu(T)}(\Gamma)-J_{\nu(T)}(\Gamma))\right)
\end{align}
Our results exactly coincide with the result seen in Chapter 5.1. in \cite{fu_conditional_1997}, where the critical rate for a critical jump is also seen to be the hazard function.
\begin{remark}
As it can be seen in these formulas or in \cite{fu_conditional_1997} Chapter 3, closed forms for the critical rate are difficult to obtain. If we were to apply these same ideas to the vector $\bm{\alpha}$, we would have to find a way of computing fast densities of fractional phase-type distributions \cite{albrecher2020multivariate}, which is unadviced. CMC ideas applied to entries of $a_{jk}$ become so difficult that naturally lead to stochastic automatic differentiation ideas and importance sampling \cite{arya2022automatic}.    
\end{remark}
\subsection{Density function in the pathspace}
\label{appendix-B}
Instead of studying $\Gamma_t:[0,T]\times \Omega \rightarrow \bm{S}$ as a stochastic process over the random number stream of the computer $\Omega$, we can split the set $\{\Gamma_t:[0,T]\times \Omega \rightarrow \bm{S}\}$, as a probability space, as follows
\begin{equation}
    \bigcup_{k=0}^\infty\{\nu_T=k\}=\bigcup_{\nu_T=0}^\infty \left\{\{S_j,\Delta \tau_j\}_{j=1}^{1+\nu_T}\mid \sum_{j=1}^{\nu_T} \Delta \tau_j < T\leq \sum_{j=1}^{1+\nu_T} \Delta \tau_j \right\}
\end{equation}
where the union is disjoint. This is the consequence of a path being fully described by $\nu_T$ and its sequence of $1+\nu_T$ states and sojourn times. By splitting the probability space, the sojourn times are found to be in an adequate subset of $\mathbb{R}^{1+\nu_T}$ for each fixed $\nu_T$, which allows us to make use of density functions once we fix a value for $\nu_T$.

To obtain the Malliavin weights formula for a fixed spatial path, we condition over $\nu_T$, which recasts the expected value as a weighted sum of conditional expected values.
\begin{align}
    &\partial_\theta\mathbb{E}_\theta(J_\theta(\Gamma_t))=\partial_\theta\mathbb{E}_\theta(\mathbb{E}_\theta(J_\theta(\Gamma_t)\mid \nu_T))\nonumber\\
    &=\partial_\theta\sum_{k=0}^\infty \mathbb{P}_\theta(\nu_T=k)\mathbb{E}_\theta(J_\theta(\Gamma_t)\mid \nu_T=k)\nonumber\\&=\sum_{k=0}^\infty \mathbb{P}_\theta(\nu_T=k)(\partial_\theta\log(\mathbb{P}_\theta(\nu_T=k))\mathbb{E}_\theta(J_\theta(\Gamma_t)\mid \nu_T=k)+\partial_\theta\mathbb{E}_\theta(J_\theta(\Gamma_t)\mid \nu_T=k))
\end{align}
The sum over $k$ is in practice taken via Monte-Carlo simulation. The conditional expected values are integrals over Euclidean subsets weighted by the density given in Equation \ref{p_gamma} \cite{ertel2022operationally}, so we can rigorously apply the Malliavin weight technique \cite{warren2013malliavin}, which yields the same formulas found in Section \ref{sec:mall}. Notice that the left term $(\partial_\theta\log(\mathbb{P}_\theta(\nu_T=k))\mathbb{E}_\theta(J_\theta(\Gamma_t)\mid \nu_T=k)$ is cancelled by the right term, as $\mathbb{P}_\theta(\nu_T=k)$ appears as a denominator normalizing the Euclidean subsets.
This argument also applies to spatial parameters.

\end{appendices}
\bibliography{sn-bibliography}

\end{document}